\newcommand{\ds}{\displaystyle}
\newcommand{\CC}{\mathbb{C}}
\newcommand{\NN}{\mathbb{N}} 
\newcommand{\FF}{\mathbb{F}}  
\newcommand{\KK}{\mathbb{K}}
\newcommand{\imag}{\operatorname{Im}}
\newcommand{\End}{\operatorname{End}}
\newcommand{\id}{\operatorname{id}}
\newcommand{\Hom}{\operatorname{Hom}}  
\newcommand{\RMod}{\operatorname{R-Mod}} 
\newcommand{\PMod}{\operatorname{\mathbb{K}[x]-Mod}}  
\newcommand{\msc}{\operatorname{WSP}} 
\newcommand{\msd}{\operatorname{LRep}(\Gamma)}
\newcommand{\Soc}{\operatorname{Soc}} 
\newcommand{\Ext}{\operatorname{Ext}}
\newcommand{\lh}{{\rm length}}
\newtheorem{theorem}{Theorem}
\newtheorem{corollary}{Corollary}
\newtheorem{question}{Question}
\title{On the Toeplitz-Jacobson algebra and direct finiteness}
\author{\sc Miodrag Cristian Iovanov \\ 
\vspace{.5cm} {\small  University of Iowa, USA and University of Bucharest, Romania }\\
%\vspace{.5cm}
\sc Alexander Sistko \\
{\small  University of Iowa, USA}
}
\begin{document}
\maketitle

\date{}

\begin{abstract}
We study the representation theory of the algebraic Toeplitz algebra $R=\KK\langle x,y\rangle/\langle xy-1\rangle$, give a few new structure and homological theorems, completely determine one-sided ideals and survey and re-obtain results from the existing literature as well. We discuss its connection to Kaplansky's direct finiteness conjecture, and a possible approach to it based on the module theory of $R$. In addition, we discuss the conjecture's connections to several central problems in mathematics, including Connes' embedding conjecture.\footnote{{2000 \textit{Mathematics Subject Classification}. Primary 16W30; Secondary 16S50, 16D90, 16L30, 20B07, 47L80}\\
{\bf Keywords} Toeplitz algebra, direct finiteness, Kaplanski conjecture, Leavitt path algebra, group algebra}
\end{abstract}

%\maketitle

\section{Introduction}

The {\em{ Jacobson algebra}} or the {\em{ Toeplitz-Jacobson algebra}} is the associative $\KK$-algebra defined by $R = \KK\langle x,y \rangle /(xy-1)$, over some field $\KK$. Over time its relevance to contemporary mathematics has grown: it is a Leavitt path algebra, naturally connected to the direct finiteness of algebras (a universal algebra for this problem), and possesses interesting representation-theoretic and ring-theoretic properties \cite{K}. It can also be defined as the subalgebra of $\End_\KK(V)$ for a countable dimensional vector space $V=\KK^{(\aleph_0)}$, generated by the left and right shift operators defined with respect to a fixed basis, and analytic counterparts have been present in operator theory for a long time as well. Quite a bit is known about the Jacobson algebra; unfortunately, this information is spread across several sources. A partial aim of this article is to summarize and condense some of this information for easy reference, as well as provide a few new results. In general, we phrase these results in terms of the above presentation, which is of importance to the study of direct finiteness.

In particular, we classify completely the left, right and two-sided ideals, and directly recover known results on this algebra. We characterize finite-length modules in terms of Ext groups, determine Ext groups between finite length modules and explicitly construct an equivalence of categories in the vein of \cite{AB}, which produces a parameterization of finite length modules, in the form of a functor from finite dimensional representations of the Toeplitz graph to finite length modules over $R$. 

We aim to bring to the attention of a possibly diverse audience of mathematicians the connections between several long standing conjectures in mathematics. Hence, we briefly present and survey the current state of the Direct Finiteness Conjecture of Kaplansky, and recently discovered close connections with problems appearing from operator theory, group theory, symbolic dynamics, logic, and mathematical physics as well as to a central conjecture in mathematics, Connes' Embedding conjecture, and discuss possible new results from this perspective. Finally, we discuss possible new approaches to the Direct Finiteness Conjecture of Kaplansky, based on the representation theory of $R$.

%  It is also meant to point to new directions of study. In particular, we classify left ideals and projective modules; compute certain $\Ext$ groups related to finite-length modules; explicitly construct an equivalence of categories in the vein of \cite{AB}; and discuss possible new approaches to the Direct Finiteness Conjecture of Kaplansky. 

Many of the representation-theoretic properties of $R$ can be attributed to its unique ideal structure. Indeed, we have the following, which the reader can confirm through direct computation: 

\begin{theorem} 
The socle of $R$ is the two-sided ideal $I = \langle 1-yx\rangle$, and it is the unique minimal two-sided ideal of $R$.  $I$ decomposes into a countable direct sum $\ds I = \bigoplus_{n=1}^{\infty}{S_n}$ of simple left modules (ideals), and similarly to the right, where $S_n$ is faithful, generated by the idempotent $f_n:= y^{n-1}x^{n-1}-y^nx^n$, and $S_n \cong S_1$ for $n \geq 1$.
\end{theorem}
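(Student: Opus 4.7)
My plan is to realize $R$ faithfully on the countably-infinite-dimensional vector space $V=\bigoplus_{k\geq 0}\KK e_k$ via $y\cdot e_k=e_{k+1}$ and $x\cdot e_k=e_{k-1}$ (with $x\cdot e_0=0$). Under this action $e=1-yx$ becomes the rank-one projection onto $\KK e_0$, and $E_{ij}:=y^iex^j$ becomes the matrix unit sending $e_j\mapsto e_i$ and $e_l\mapsto 0$ for $l\neq j$; in particular $f_n=y^{n-1}ex^{n-1}=E_{n-1,n-1}$. Faithfulness follows from a short induction: if $a=\sum a_{ij}y^ix^j$ annihilates $V$, evaluating on $e_0,e_1,e_2,\ldots$ in turn forces $a_{i,0}=0$, then $a_{i,1}=0$, etc. A routine normal-form calculation using $xy=1$ also gives $x^ny^n=1$, whence $g_n:=y^nx^n$ satisfies $g_ng_m=g_{\max(n,m)}$, so the $f_n=g_{n-1}-g_n$ are pairwise-orthogonal idempotents with partial sums $f_1+\cdots+f_N=1-g_N$.

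With this dictionary I would compute $Rf_n$ directly, using only $xe=0$ and $ey=0$, obtaining $Rf_n=\bigoplus_{i\geq 0}\KK E_{i,n-1}$ -- the $(n{-}1)$-st ``column'' of matrix units. The left-$R$-module map $Re\to Rf_n$, $u\mapsto ux^{n-1}$, sends $y^ie\mapsto E_{i,n-1}$ and is therefore an isomorphism, proving $S_n\cong S_1$. For simplicity of $Re$, take $0\neq v=\sum c_iy^ie\in Re$ with smallest nonzero coefficient $c_{i_0}$: then $ex^{i_0}v=c_{i_0}e$, so $e\in Rv$ and $Rv=Re$. Faithfulness of $S_n\cong Re\cong V$ is then immediate from the faithfulness of the representation. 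For the decomposition of $I$, note that $(Re)y^k=0$ for $k\geq 1$ (since $ey=0$), so $I=ReR=\sum_{l\geq 0}(Re)x^l=\sum_{n\geq 1}Rf_n$; orthogonality of the $f_n$ makes this sum direct, as right multiplication by $f_m$ projects onto the $m$-th summand. The right-sided decomposition follows by symmetry through the anti-automorphism $x\leftrightarrow y$.

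Semisimplicity of $I$ gives $I\subseteq\Soc(R)$. For the reverse inclusion, $R/I\cong\KK[x,x^{-1}]$ (via $y\mapsto x^{-1}$) is a commutative domain that is not a field, hence has no minimal nonzero ideal, and in particular no simple left $R$-submodule; so every simple left ideal of $R$ must lie in $I$. For uniqueness of $I$ among nonzero two-sided ideals, given $0\neq a\in R$ the faithful representation produces indices $m_0,k_0$ such that the $(m_0,k_0)$ matrix entry $A_{m_0 k_0}$ of $a$ is nonzero, and the one-line calculation $E_{0,m_0}\cdot a\cdot E_{k_0,0}=A_{m_0 k_0}\cdot e$ places $e$ inside every nonzero two-sided ideal. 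The main obstacle is organizational: bare normal-form manipulation would bury the argument in case analyses for products $x^ay^b$, whereas the matrix-unit picture collapses each step to a single line; the one place needing genuine care is faithfulness of the representation itself, which is the short induction above.
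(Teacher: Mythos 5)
Your proof is correct. The paper itself offers no argument for this theorem; the stated ``proof'' is a pointer to the references \cite{AAJZ}, \cite{B}, \cite{Co}, backing the remark just before the theorem that it ``can be confirmed through direct computation.'' Your write-up is exactly such a direct computation, and the matrix-unit formalism $E_{ij}=y^iex^j$ (with $e=1-yx$, $f_n=E_{n-1,n-1}$) is a clean way to organize it: each assertion---$f_n$ orthogonal idempotents, $Rf_n=\bigoplus_i\KK E_{i,n-1}$, $Re\to Rf_n$ via $u\mapsto ux^{n-1}$, simplicity of $Re$ by hitting a lowest term with $ex^{i_0}$, $ReR=\sum_{n\geq 1}Rf_n$ since $ey=0$, and $e$ lying in every nonzero two-sided ideal via $E_{0,m_0}\,a\,E_{k_0,0}=A_{m_0k_0}e$---becomes a one-line check once the shift representation on $V$ is known to be faithful. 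You handle the one genuinely delicate point correctly: the span of $\{y^ix^j\}$ follows from the rewriting rule $xy\to 1$, and linear independence then comes from the representation, which simultaneously establishes faithfulness, so there is no circularity. The only stylistic gap relative to a textbook-complete account is that you assert $\{y^ix^j\}$ spans $R$ without writing out the reduction, and you could spell out that $\Soc(R)\subseteq I$ because any simple left ideal $S$ with $S\cap I=0$ would embed in $R/I\cong\KK[x,x^{-1}]$, which as a PID domain that is not a field has no minimal nonzero ideals; but both of these are exactly where you point and both close immediately. In short: the approach is the natural one, it is complete in outline, and it supplies the argument the paper elides in favor of a citation.
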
 

\begin{proof} 
See, for instance, \cite{AAJZ}, \cite{B}, \cite{Co}. 
\end{proof} 

Using the fact that $I$ is the unique minimal two-sided ideal of $R$, one sees that a simple module $T$ is either not faithful, in which case it is a simple module over the Laurent polynomials $R/I=\KK[X,X^{-1}]$, or it is faithful and any surjective morphism $\varphi:R\rightarrow T$ of left modules satisfies $I\not\subseteq \ker(\varphi)$. In this case $\varphi$ restricts to a surjection $\varphi\vert_I:I\rightarrow T$; hence, $T\cong S_1$. See, for instance, \cite{B}. The simple module $S=S_1$ can be viewed as the above mentioned representation of $R$ into $\End(V)$; indeed, $S=S_1={\rm Span}_\KK\{f_1,yf_1,y^2f_1,\dots,\}$ and $x$ and $y$ act as left and right shifts, respectively.

%It is easy to show that any faithful simple $R$-module must be isomorphic to a simple left ideal of $R$ contained in $I$: , so that up to isomorphism $S_1$ is the only faithful simple $R$-module. We can recover the remaining simple modules by using the fact that $I$ is the unique minimal ideal of $R$, and $R/I \cong \KK [x,x^{-1}]$. For instance, see the paper of Bavula.  

One can also recover this classification from general results on Leavitt path algebras. Indeed, $R$ is the Leavitt path algebra of the following graph, hereafter refered to simply as $\Gamma$:   

\begin{equation}
\xymatrix{{}_u\bullet & \bullet_v \ar[l]_{e}\ar@(dr,ur)[]_f}
%\xygraph{
%!{<0cm,0cm>;<1cm,0cm>:<0cm,1cm>::}
%!{(0,0) }*+{\bullet_{u}}="u"
%!{(4,0) }*+{\bullet_{v}}="v" 
%"v":"u" _{e} 
%"v":"v" _{f} 
%}   
\end{equation}

\bigskip

Theorem 1.1 of \cite{AR}  then applies, and we see that all simple $R$-modules are {\em{Chen modules}} \cite{Ch}.

The Jacobson algebra also enjoys nice homological properties. In particular, it is hereditary \cite{B}, \cite{AMP}. As a consequence, the classification of projective modules is reduced to the classification of left ideals; and every left ideal is isomorphic to either $S_1^{\oplus k}$ for some $k$, $I$, or $R$. Of course, this means that up to isomorphism, the only indecomposable projective is $S=S_1$. 

\section{The results}

\subsection*{Ideal structure}

In regards to ideals of $R$, we can be quite explicit; our aim is to provide a complete structural theorem (not only up to isomorphism) in terms of the generators $x,y$ (which are specific for this Leavitt path algebra), in view of the connections to direct finiteness. 

\begin{theorem} \label{s.1}
Every left ideal of $R$ is of the form $\Sigma \oplus Rp(x)$ where $p(x)$ is a polynomial and $\Sigma$ is contained in the socle of $R$.
\end{theorem}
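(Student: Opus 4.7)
The plan is to identify $p(x)$ as a generator of $J := L \cap \KK[x]$ (an ideal of the PID $\KK[x]$), establish $L = Rp(x) + (L\cap I)$, and then use the semisimplicity of the socle $I$ to extract a complementary summand $\Sigma \subseteq I$.

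Using the normal form $R = \bigoplus_{i,j \geq 0} \KK y^i x^j$ coming from $xy = 1$, the set $J$ is a $\KK[x]$-submodule of $\KK[x]$ (since $\KK[x] \subseteq R$ is a subalgebra and $L$ is a left ideal), hence $J = (p(x))$ for some polynomial $p(x)$, possibly zero. One sees $p \neq 0$ whenever $L \not\subseteq I$: given $\alpha = \sum a_{ij} y^i x^j \in L \setminus I$ with $N$ the largest $y$-power appearing, the relation $xy = 1$ forces $x^N y^i x^j = x^{N-i+j}$ for $i \leq N$, so $x^N\alpha \in \KK[x] \cap L = J$, and it maps under $\pi \colon R \to R/I \cong \KK[X,X^{-1}]$ to $X^N\pi(\alpha) \neq 0$. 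For arbitrary $\alpha \in L$, picking $N$ at least the maximal $y$-power of $\alpha$, write $x^N\alpha = f(x)p(x)$; then the telescoping identity
\[
1 - y^N x^N \;=\; \sum_{k=1}^{N} f_k \;\in\; I
\]
(with the $f_k$ as in Theorem~1) yields $\alpha = y^N x^N \alpha + (1-y^N x^N)\alpha = y^N f(x) p(x) + \tau_\alpha$, where $y^N f(x) p(x) \in Rp(x) \subseteq L$ and hence $\tau_\alpha \in L \cap I$. Thus $L = Rp(x) + (L\cap I)$.

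Since Theorem~1 shows $I = \bigoplus_{n \geq 1} S_n$ is semisimple, the submodule $Rp(x)\cap I$ of $L \cap I$ admits a complement $\Sigma \subseteq L \cap I \subseteq I$, i.e.\ $L \cap I = \Sigma \oplus (Rp(x) \cap I)$; substituting into the previous step gives $L = Rp(x) \oplus \Sigma$, with the degenerate case $L \subseteq I$ handled by $p = 0,\ \Sigma = L$. I expect the main technical point to be the reduction trick that trades maximal $y$-height for maximal $x$-degree via $x^N\alpha \in \KK[x]$ and then reinstates everything using the telescoping formula $1 - y^N x^N = \sum_k f_k$; once this is in place, the remaining steps follow formally from the PID structure of $\KK[x]$ and the semisimplicity of $I$.
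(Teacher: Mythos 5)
Your proof is correct, but takes a genuinely different route from the paper's. The paper's argument partitions a generating set of the ideal $L$ into elements killed by a power of $x$ (indexed by $A_1$) and the rest (indexed by $A_2$); for $\alpha \in A_2$ it forms the polynomial $p_\alpha(x) = x^{k_\alpha}m_\alpha$, observes that $L/\sum_{\alpha\in A_2}Rp_\alpha(x)$ is generated by cosets killed by powers of $x$ and hence --- using $R/Rx^k\cong\bigoplus_{i=1}^k S_i$ --- is a direct sum of copies of the \emph{projective} module $S_1$, so that the inclusion of $\sum Rp_\alpha(x)$ splits; the PID property of $\KK[x]$ is then invoked at the very end to collapse $\sum Rp_\alpha(x)$ to a single $Rp(x)$. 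You instead identify $p$ at the outset as a generator of $L\cap\KK[x]$, use the telescoping identity $1-y^Nx^N=\sum_{k=1}^N f_k\in I$ to write each $\alpha\in L$ as $y^Nf(x)p(x)+\tau_\alpha$ with the first term in $Rp(x)$ and $\tau_\alpha\in L\cap I$, concluding $L=Rp(x)+(L\cap I)$; the complement $\Sigma$ is then extracted by the \emph{semisimplicity} of $L\cap I\subseteq I$ rather than by projectivity. The two mechanisms for the splitting are thus different (projectivity of $S_1$ in the paper, semisimplicity of $I$ for you), and your approach is more element-wise and computational, avoiding the lemma about modules generated by $x$-torsion elements. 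A side benefit of your route is that it makes explicit from the start the characterization of $p$ as the monic generator of $L\cap\KK[x]$ (equivalently the polynomial of minimal degree in $L$), which the paper establishes only afterward, in the discussion between the corollaries and Theorem~3.
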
 

\begin{proof}
 Observe that $\ds R/Rx^k \cong \bigoplus_{i=1}^k{S_i}$ for each $k\geq 1$. It then follows that if an $R$-module $M$ possesses a generating set $G= \{ m_{\alpha} \mid \alpha \in A\}$, each of which is annihilated by a power of $x$, then $M$ is isomorphic to a direct sum of copies of $S_1$.  
Now, suppose that $M$ is a left ideal. Then the indexing set $A$ may be partitioned into $A = A_1 \cup A_2$, with $\alpha \in A_1$ if and only if $m_{\alpha}$ is annihilated by some power of $x$. But since $xy = 1$ and the collection $\{ y^ix^j \mid i,j \geq 0 \}$ forms a $\KK$-basis for $R$, for each $\alpha \in A_2$ there exists an $x^{k_{\alpha}}$ such that $p_{\alpha}(x) := x^{k_{\alpha}}m_{\alpha}$ is a polynomial in $x$. But then $\ds M/\left( \sum_{\alpha \in A_2}{Rp_{\alpha}(x)} \right)$ is generated by the cosets of elements in $G$, which are each annihilated by powers of $x$. Hence $\ds M/\left( \sum_{\alpha \in A_2}{Rp_{\alpha}(x)} \right)$ is isomorphic to a direct sum of copies of $S_1$, and in particular projective. %(recall that $S_1$ was generated by an idempotent.) 
In other words, $M$ may be written as  $M = \Sigma \oplus P$, where $\Sigma$ is contained in the socle of $R$ and $P$ is generated by the polynomials $\{ p_{\alpha}(x)\mid \alpha \in A_2 \}$. Since $P$ necessarily contains the ideal of $\KK[x]$ generated by the $p_{\alpha}$'s, there exists a single $p(x) \in \KK[x]$ generating this left ideal $P$.
\end{proof}  

\begin{corollary} 
Let $\Sigma \oplus Rp(x)$ be as in the previous theorem. If $p(x) \neq 0$ then $\Sigma \oplus Rp(x)$ is finitely-generated (and $\Sigma$ is of finite length). In particular, every left ideal of $R$ is either semisimple or finitely-generated.
\end{corollary}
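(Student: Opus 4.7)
The plan is to reduce the corollary to a single bound: when $p \ne 0$, $\mathrm{length}(\Sigma) \le \deg p$. Once this is proved, finite generation of $\Sigma\oplus Rp(x)$ is automatic, since $Rp(x)$ is cyclic and $\Sigma$ --- being semisimple of finite length --- is finitely generated. The dichotomy in the last sentence then follows by separating the case $p=0$ (giving $M=\Sigma \subseteq \Soc(R)$, hence semisimple) from $p\ne 0$.

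The first ingredient is the fact, established in the course of proving Theorem \ref{s.1}, that $\Sigma \cong M/Rp(x)$ is not merely contained in $\Soc(R)$ but is in fact a direct sum of copies of $S_1$, say $\Sigma\cong S_1^{(k)}$ for some cardinal $k$. Since $S_1=Rf_1$ with $xf_1=0$ and $x$ acting as a shift on the basis $\{y^if_1\}_{i\geq 0}$, the subspace $\Sigma^x := \{m \in \Sigma : xm=0\}$ has $\KK$-dimension exactly $k$. On the other hand, the directness of the decomposition $\Sigma\cap Rp(x)=0$ means the quotient map $R\to R/Rp(x)$ restricts to an injection on $\Sigma$, and hence on $\Sigma^x$. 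It therefore suffices to show
\[
\dim_\KK \{\bar m \in R/Rp(x) : x\bar m = 0\} \le \deg p.
\]

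To compute this kernel I would exploit the observation that $R=\bigoplus_{a\ge0}y^a\KK[x]$ is free as a right $\KK[x]$-module, which produces the $\KK$-basis $\{y^ax^j : a\ge0,\ 0\le j<\deg p\}$ of $R/Rp(x)$ together with a direct-sum decomposition $R/Rp(x)=\bigoplus_{a\ge0}V_a$ with each $V_a\cong \KK[x]/(p)$. In this language the action of $x$ is transparent: $x$ identifies $V_a$ with $V_{a-1}$ for $a\ge 1$, and acts on $V_0$ as multiplication by $x$, i.e., by the companion operator of $p$. Writing $m = \sum_{a\ge 0} v_a$ with $v_a \in V_a$ and solving $xm=0$ immediately yields $v_{a+1}=0$ for $a\ge 1$ and $v_1=-xv_0$, so $\ker x$ is parametrized by a single $v_0\in V_0$ and has dimension $\deg p$. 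This is the only real computation in the argument; the rest is formal, and the main conceptual step is simply to recognize that the right $\KK[x]$-module structure of $R$ makes the quotient by $Rp(x)$ completely transparent.
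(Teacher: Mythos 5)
Your argument is correct, and it reaches the required bound $\lh(\Sigma)\le\deg p$ by a genuinely different route than the paper's. The paper instead builds the short exact sequence $0 \to I/Ip(x) \to R/Rp(x) \to \KK[x,x^{-1}]/(p) \to 0$ and proves by an explicit inductive computation with the idempotents $f_i$ that $I=\bigoplus_{i=1}^{\deg p}S_i\oplus Ip(x)$, so that $R/Rp(x)$ has finite length $2\deg p$ and any copy of $\Sigma$ inside it does too. You sidestep all of that by viewing $R=\bigoplus_{a\ge 0}y^a\KK[x]$ as a free right $\KK[x]$-module, so that $R/Rp(x)\cong\bigoplus_{a\ge 0}V_a$ with $V_a\cong\KK[x]/(p)$, on which left multiplication by $x$ acts as the shift $V_a\xrightarrow{\sim}V_{a-1}$ for $a\ge 1$ and as the companion operator on $V_0$; the $x$-annihilator is then visibly $\deg p$-dimensional, and comparing it with $\Sigma^x$ gives the bound. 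Each approach has something the other lacks: yours is cleaner and more conceptual, exposing the right-$\KK[x]$-module structure that underlies much of this algebra, whereas the paper's approach produces the explicit decomposition $I=\bigoplus_{i=1}^{\deg p}S_i\oplus Ip(x)$ as a by-product, which the paper then reuses (in Theorem 3 and the remarks leading to Corollary 4) to pin down exactly which submodule of $S_1\oplus\cdots\oplus S_{\deg p}$ can appear as $\Sigma(H)$. If your version were substituted, that downstream bookkeeping would have to be recovered separately.

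One small presentational point: after obtaining $v_{a}=0$ for $a\ge 2$ and $v_1=-xv_0$ you should state explicitly that these give a bijection between $\ker(x)$ and $V_0$, hence $\dim_\KK\ker(x)=\deg p$ exactly (not merely $\le\deg p$); the inequality as written is what you need, but the precise count is free and worth recording.
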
  

\begin{proof} 
The $R$-module homomorphism $R \rightarrow Rp(x)$ mapping $1\mapsto p(x)$ is an isomorphism, and therefore maps the socle of $R$ onto the socle of $Rp(x)$. In other words, $Ip(x) = \Soc (Rp(x))$. Since $Rp(x)$ is a left ideal, we must also have $Ip(x) = Rp(x) \cap I$. Now, the injection $I \rightarrow R$ descends to an injection $I/Ip(x) = I/(I \cap Rp(x)) \rightarrow R/Rp(x)$, and the image of this map is $(I+Rp(x))/Rp(x)$, and $(R/Rp(x))/(I+Rp(x))/Rp(x) \cong R/(Rp(x)+I) \cong \KK[x,x^{-1}]/(p)$. Hence, we have a short exact sequence $0 \rightarrow I/Ip(x) \rightarrow R/Rp(x) \rightarrow \KK[x,x^{-1}]/(p) \rightarrow 0$. Now, $I/Ip(x)$ is a quotient of a semisimple module, and is therefore semisimple.  We claim that $\ds \bigoplus_{i=1}^d{S_i} \oplus Ip(x) = I$, where $d=\deg(p)$, $p=\sum\limits_{i=0}^d\alpha_ix^i$. Indeed, the elements $f_1,\ldots , f_d$ are clearly in the $R$-module $\ds \bigoplus_{i=1}^d{S_i} + Ip(x)$.  But then $\ds f_1p(x) = f_1\sum_{i=0}^d{\alpha_ix^i} = \sum_{i=0}^d{\alpha_if_1x^i} = \sum_{i=0}^d{\alpha_ix^if_{i+1}} = x^df_{d+1} + \sum_{i=0}^{d-1}{\alpha_ix^if_{i+1}}$, %\in \bigoplus_{i=1}^d{S_i} + Ip(x)$, 
which implies that $\ds x^df_{d+1} \in \bigoplus_{i=1}^d{S_i} + Ip(x)$.  Since $S_{d+1}$ is simple, this means $\ds S_{d+1} \subseteq \bigoplus_{i=1}^d{S_i} + Ip(x)$.  Repeating this with the elements $f_kp(x)$ for $k > 1$, we can prove by induction that $\ds S_{d + k} \subseteq \bigoplus_{i=1}^d{S_i} + Ip(x)$.  In other words, $\ds I = \bigoplus_{i=1}^d{S_i} + Ip(x)$.  For $\ds u = \sum_{i=1}^n{a_if_i} \in I$ with $a_nf_n \neq 0$ ($a_i\in R$), we compute $\ds \sum{a_if_ip(x)} = \sum_{k=1}^n{\sum_{i=0}^d{\alpha_ia_kx^if_{k+i}}} = a_nx^df_{d+n} + \sum_{k+i<d+n}{\alpha_ia_kx^if_{k+i}}$.  But for $a_nf_n \neq 0$, we may assume that $a_n$ is a nonzero element in the span of the set $\{ y^ix^j \mid j = 0,\ldots , n-1 \}$.  Hence, $a_nx^d$ cannot annihilate $f_{d+n}$, so that $a_nx^df_{d+n} \neq 0$. Therefore, $up(x)\not\in \bigoplus\limits_{i=1}^dS_i$; hence $\ds  Ip(x) \cap \bigoplus_{i=1}^d{S_i} = \{ 0 \}$, and $\ds \bigoplus_{i=1}^d{S_i} \oplus Ip(x) = I$.  This implies $\ds I/Ip(x) = \bigoplus_{i=1}^d{S_i}$, so that $R/Rp(x)$ fits into a short exact sequence $\ds 0 \rightarrow \bigoplus_{i=1}^{\deg p}{S_1}\rightarrow R/Rp \rightarrow \KK[x,x^{-1}]/(p) \rightarrow 0$. In particular, $R/Rp$ has finite length. Therefore, so does $(\Sigma \oplus Rp)/Rp\cong \Sigma$. Hence, $\Sigma\oplus Rp$ is finitely generated.% since $Rp$ is cyclic.
%But then $\Sigma \oplus Rp$ is an extension of a finite length module by a cyclic module, and is therefore finitely generated.
\end{proof} 

In particular, this recovers the classification of projectives in a direct way. Note that since $1=f_1+yx$, $R\cong R\oplus S$ and so $R\cong R\oplus \Sigma$ for every finitely generated semisimple module $\Sigma$ (this relation is a particular instance of the Ara-Mareno-Pardo Realization Theorem \cite{Ab1}.) In particular, since $Rp(x)\cong R$ and $S$ is projective, by the previous Corollary we see directly that $R$ is hereditary. Therefore, every projective is a direct sum of left ideals, and so, we get

\begin{corollary} 
Up to isomorphism, every projective $R$-module $P$ is of the form $P=R^{(\alpha)}\oplus S^{(\beta)}$ for cardinalities $\alpha,\beta$. Moreover, $(\alpha,\beta)$ can be chosen in one of the following two forms, which completely describe the isomorphism type of $P$:
\begin{center}
(1) $\alpha$ finite, $\beta=0$ or $\beta\geq\aleph_0$\\
(2) $\alpha$ infinite, $\beta=0$ or $\beta>\alpha$.
\end{center}
%\begin{eqnarray*}
%\alpha {\rm \,\,\,finite,\,\,\,\,\,} & & \beta=0{\rm\,\,or\,\,}\beta\geq\aleph_0\\
%\alpha {\rm \,\,\,infinite,\,\,\,\,\,} & & \beta=0{\rm\,\,or\,\,}\beta>\alpha
%\end{eqnarray*}
%Up to isomorphism, every projective $R$-module is either free, or can be written as $F \oplus J$, where $F$ is either $0$ or free and $\ds J = \bigoplus_{\alpha \in A}{S_1}$, with $A$ an infinite set. 
\end{corollary}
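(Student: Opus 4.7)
The plan is to reduce the classification of projectives to that of left ideals via the hereditariness of $R$. Since $R$ is left hereditary, a classical theorem of Kaplansky guarantees that every projective $R$-module is a direct sum of (modules isomorphic to) left ideals, so it suffices to identify all possible left ideals up to isomorphism. Theorem~\ref{s.1} and its corollary pin these down: if $p(x)\neq 0$, then $r\mapsto rp(x)$ gives an isomorphism $R\cong Rp(x)$ and $\Sigma$ is of finite length, so combining with $R\cong R\oplus S$ (which follows from $1=f_1+yx$) yields $\Sigma\oplus Rp(x)\cong R$; if $p(x)=0$, the ideal is a semisimple submodule of $\Soc(R)=I\cong S^{(\aleph_0)}$ and is hence isomorphic to $S^{(\gamma)}$ for some cardinal $\gamma\leq\aleph_0$. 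Grouping the summands of a projective $P$ by these three types gives a decomposition $P\cong R^{(\alpha)}\oplus S^{(\beta)}$.

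For the canonical forms I would iterate $R\cong R\oplus S$. When $\alpha\geq 1$ is finite, this absorbs any finite number of $S$-summands, yielding $R^{(\alpha)}\oplus S^{(\beta)}\cong R^{(\alpha)}$ for all finite $\beta$ and establishing case (1). When $\alpha$ is infinite, applying $R\cong R\oplus S$ coordinate-wise produces $R^{(\alpha)}\cong R^{(\alpha)}\oplus S^{(\alpha)}$, and since cardinal addition satisfies $\alpha+\beta=\alpha$ whenever $\beta\leq\alpha$, we conclude $R^{(\alpha)}\oplus S^{(\beta)}\cong R^{(\alpha)}$ for all $\beta\leq\alpha$, establishing case (2).

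For the uniqueness of the canonical form, I would read off $(\alpha,\beta)$ from intrinsic invariants of $P$. The quotient $P/\Soc(P)\cong\KK[x,x^{-1}]^{(\alpha)}$ is free over the PID $\KK[x,x^{-1}]$ and so has a well-defined rank $\alpha$, while $\Soc(P)\cong S^{(\alpha\cdot\aleph_0+\beta)}$ determines $\beta$ once $\alpha$ is fixed in most cases. The principal subtlety I anticipate is case (1) with $\alpha\geq 1$ finite, where the socle alone cannot separate $\beta=0$ from $\beta=\aleph_0$ (both yield a countable socle). I would handle this by a separate argument showing that $R^{(\alpha)}\oplus S^{(\beta)}$ requires at least $\beta$ generators when $\beta\geq\aleph_0$, since each element of the semisimple summand $S^{(\beta)}$ has finite support in the direct-sum decomposition and is preserved in support by the componentwise $R$-action, whereas $R^{(\alpha)}$ is $\alpha$-generated; establishing this lower bound on the minimal cardinality of a generating set is the main technical step.
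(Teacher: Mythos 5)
Your argument is correct and follows the same path as the paper: left hereditariness (via the Kaplansky-type theorem the paper invokes in the paragraph preceding the corollary) reduces the classification to direct sums of left ideals; those are, up to isomorphism, $R$ or semisimple submodules of $I$; and the relation $R\cong R\oplus S$ absorbs $S$-summands whenever $\beta\leq\alpha$ or $\beta$ is finite (with $\alpha\geq 1$). The paper's proof consists only of this absorption isomorphism; you go further and actually verify that the canonical form is an isomorphism invariant, via the rank of $P/\Soc(P)$ over $\KK[x,x^{-1}]$, the number of simple summands in $\Soc(P)$, and a generator count to separate $\beta=0$ from $\beta\geq\aleph_0$ when $\alpha$ is finite and positive. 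For that last step, a cleaner route than the support-counting you sketch: a direct summand of a finitely generated module is finitely generated (project a finite generating set), so for finite $\alpha\geq 1$, $R^{(\alpha)}$ is finitely generated while $S^{(\beta)}$ with $\beta\geq\aleph_0$ is not, hence $R^{(\alpha)}\not\cong R^{(\alpha)}\oplus S^{(\beta)}$. One small defect that your argument shares with the corollary as written: when $\alpha=0$ the two listed forms omit $0<\beta<\aleph_0$, yet the finite semisimple modules $S^{(n)}$ are genuinely projective and are not covered by (1) or (2); the statement should be read as permitting arbitrary $\beta$ when $\alpha=0$.
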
 
\begin{proof}
This follows since $R^\alpha\oplus S^\beta\cong R^\alpha$ whenever $\beta\leq \alpha$ or they are both finite.
\end{proof}

%\begin{proof} 
%It is easy to check for $p(x) \neq 0$ that $Rp(x) \cong R$. Then from the decomposition $1 = (1-yx) + yx$ it follows that $R\oplus S_1 \cong R$  Therefore, $R \oplus \Sigma \cong R$ whenever $\Sigma$ is isomorphic to a finite direct sum of $S_1$'s. Since $R$ is hereditary, a projective module $P$ can be decomposed as $\ds P = \bigoplus_{\alpha \in A}{L_{\alpha}}$, with each $P_{\alpha}$ isomorphic to a left ideal of $R$. The classification above then implies the claim.
%\end{proof}

We now note that for an ideal $H=\Sigma\oplus Rp(x)$ as above, then of course, $p(x)$ is not unique: we have $R=\bigoplus\limits_{i=1}^dS_i\oplus Rx^d$, and hence 
\begin{equation}\label{e2}
H=\Sigma\oplus Rp(x)=\Sigma\oplus (\bigoplus\limits_{i=1}^dS_i\oplus Rx^d)p(x)=\left(\Sigma\oplus(\bigoplus\limits_{i=1}^dS_i)p(x)\right)\oplus Rx^dp(x).
\end{equation} 
Nevertheless, such a $p$ of minimal degree is, in fact, unique up to scalar multiplication. Let $H\cap \KK[x]$ be generated by $h(x)$ as an ideal of $\KK[x]$ (which is isomorphic to polynomials in one variable over $\KK$); that is, $h(x)$  is a polynomial of minimal degree in $H$. If $p(x)$ is as in (\ref{e2}), then $p(x)$ generates $H+I/I$ in $R/I=\KK[x,x^{-1}]$, so $h(x)=\lambda x^dp(x)$ or $p(x)=\lambda x^dh(x)$ modulo $I$ for some $d\geq 0$ and $\lambda\in\KK^\times$. Since every element of $I$ is annihilated by a power of $x$, we get either $x^nh(x)=\lambda x^{n+d}p(x)$ or $x^np(x)=\lambda x^{n+d}h(x)$ for some $n$. By canceling inside $\KK[x]$ and  using the minimality of $h$, it must be that $p(x)=\lambda x^dh(x)$. But, again by equation (2), we now see that $h(x)$ has the same property: $H=\Sigma'\oplus Rh(x)$ for some semisimple $\Sigma'\subseteq I$. Hence, $p(x)$ will be uniquely determined up to scalar multiplication by either of the conditions that (1) it is of minimal degree in $H$ or (2) it is of minimal degree for which $H=\Sigma\oplus Rp(x)$ for some semisimple $\Sigma$. 
% First, note that Indeed, let $p(x),q(x)\in H$
%, and so either $p(x)=\lambda x^dq(x)$ or $q(x)=\lambda x^dp(x)$ modulo $I$ for some $d\geq 0$ and $\lambda\in\KK^\times$. Assume the first possibility; since every element of $I$ is annihilated by some power of $x$, it follows that $x^n(p(x)-\lambda x^dq(x))=0$. But this is a relation in $\KK[x]\subset R$, and this subalgebra is isomorphic to polynomials in one variable. Hence, $p(x)=\lambda x^dq(x)$, which by assumption (both of minimal degree) implies $d=0$. Moreover, this shows the set of polynomials $q(x)$ satisfying $H=\Sigma\oplus Rq(x)$ is $\{\lambda x^dp(x)| \lambda\in\KK^\times, d\geq 0\}$, where $p$ is a polynomial of minimal degree in this set. 
%Indeed, let $H=\Sigma\oplus Rp(x)=\Sigma'\oplus Rq(x)$; then the corresponding ideal $H+I/I$ of $\KK[x,x^-1]$ is generated by $p$ and $q$, and so either $p(x)=\lambda x^dq(x)$ or $q(x)=\lambda x^dp(x)$ modulo $I$ for some $d\geq 0$ and $\lambda\in\KK^\times$. Assume the first possibility; since every element of $I$ is annihilated by some power of $x$, it follows that $x^n(p(x)-\lambda x^dq(x))=0$. But this is a relation in $\KK[x]\subset R$, and this subalgebra is isomorphic to polynomials in one variable. Hence, $p(x)=\lambda x^dq(x)$, which by assumption (both of minimal degree) implies $d=0$. Moreover, this shows the set of polynomials $q(x)$ satisfying $H=\Sigma\oplus Rq(x)$ is $\{\lambda x^dp(x)| \lambda\in\KK^\times, d\geq 0\}$, where $p$ is a polynomial of minimal degree in this set. \\

Also, note that if $H=\Sigma\oplus Rp(x)$, then $\lh(\Sigma)\leq \deg(p)$, since $R/Rp\cong S^{\deg(p)}$ as seen before. Furthermore, since $R=Rp(x)\oplus \bigoplus\limits_{i=1}^dS_i$ so $R/Rp(x)\cong \bigoplus\limits_{i=1}^dS_i$, given $H\supseteq Rp(x)$, there is a unique left submodule  $\Sigma(H)$ of $\bigoplus\limits_{i=1}^d{S_i}$ for which $H=\Sigma(H)\oplus Rp(x)$.  Hence, we may summarize the structure theorem as follows:

\begin{theorem}
If $H$ is a left ideal of $R$, then either $H$ is semisimple (so contained in $I$), or there is a unique monic polynomial $p(x)\in H$ of minimal degree, and $H$ is of the form $H=\Sigma(H)\oplus Rp(x)$, for a uniquely determined submodule $\Sigma(H)\subseteq S_1\oplus\dots\oplus S_{\deg(p)}$. 
\end{theorem}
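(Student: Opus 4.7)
The plan is to package Theorem \ref{s.1}, the Corollary, and the observations from the paragraph preceding the theorem into a clean existence-and-uniqueness statement. First I would apply Theorem \ref{s.1} to write $H = \Sigma \oplus Rp(x)$ with $\Sigma \subseteq I$ semisimple and $p(x) \in \KK[x]$; if $p = 0$, then $H = \Sigma$ is semisimple and we are in the first alternative.

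Next, assume $p \neq 0$. I would introduce $h(x)$ as the unique monic generator of the nonzero ideal $H \cap \KK[x]$ of the principal ideal domain $\KK[x]$; it is of minimal degree in $H \cap \KK[x]$, and hence of minimal degree in $H$ (any polynomial lying in $H$ automatically lies in $H \cap \KK[x]$). I would then invoke the reduction sketched in the preceding discussion: comparing polynomials in $H$ modulo the socle (using $R/I \cong \KK[x,x^{-1}]$) shows that any polynomial $q(x) \in H$ appearing in a decomposition $H = \Sigma_q \oplus Rq(x)$ must be of the shape $\lambda x^k h(x)$ for some $k \geq 0$ and $\lambda \in \KK^\times$. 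Applied to the original $p$, this lets me replace $p$ by $h$ and rewrite $H = \Sigma' \oplus R h(x)$ for a new semisimple $\Sigma' \subseteq I$; uniqueness of the minimal monic polynomial is then built into the definition of $h$.

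For the canonical choice of $\Sigma(H)$, I would set $d = \deg(h)$, define $\Sigma(H) := H \cap \bigoplus_{i=1}^d S_i$, and use the identity $I = \bigoplus_{i=1}^d S_i \oplus Ih(x)$ established inside the Corollary's proof. Given $\sigma \in \Sigma'$, this decomposition of $I$ yields $\sigma = \sigma^0 + \sigma^1$ with $\sigma^0 \in \bigoplus_{i=1}^d S_i$ and $\sigma^1 \in Ih \subseteq Rh$; since $\sigma^0 = \sigma - \sigma^1 \in H$, I can place $\sigma^0$ in $\Sigma(H)$. This gives $\Sigma' \subseteq \Sigma(H) + Rh$, so $H = \Sigma(H) + Rh$; directness follows from $\Sigma(H) \cap Rh \subseteq \bigoplus_{i=1}^d S_i \cap (I \cap Rh) = \bigoplus_{i=1}^d S_i \cap Ih = 0$.

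Finally, for uniqueness of $\Sigma(H)$: any submodule $\Sigma'' \subseteq \bigoplus_{i=1}^d S_i$ with $H = \Sigma'' \oplus Rh$ is automatically contained in $H \cap \bigoplus_{i=1}^d S_i = \Sigma(H)$, and conversely any $\sigma \in \Sigma(H)$ writes as $\sigma'' + rh$ with $rh = \sigma - \sigma'' \in \bigoplus_{i=1}^d S_i \cap Ih = 0$, forcing $\sigma = \sigma'' \in \Sigma''$. The main delicate point is to carry out every splitting inside the socle $I$, where the clean decomposition $I = \bigoplus_{i=1}^d S_i \oplus Ih$ is available, and to track carefully that the choice of $h$ (and hence $d$) depends on $H$ but not on the initial choice of $p$ coming from Theorem \ref{s.1}.
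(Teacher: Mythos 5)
Your proof is correct and, in outline, follows the route the paper intends: start from Theorem~\ref{s.1}, sharpen the polynomial generator to the unique monic $h(x)$ generating $H\cap\KK[x]$, and then pin down $\Sigma(H)=H\cap\bigoplus_{i=1}^d S_i$. The one place where you deviate is also the one place where it matters. The paper's discussion derives the existence and uniqueness of $\Sigma(H)$ from the assertion that $R=Rp(x)\oplus\bigoplus_{i=1}^d S_i$, hence $R/Rp\cong\bigoplus_{i=1}^d S_i$; you instead carry out the splitting entirely inside the socle, using the decomposition $I=\bigoplus_{i=1}^d S_i\oplus Ih(x)$ established in the proof of Corollary~2. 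That substitution is not merely cosmetic. The paper's $R$-level decomposition only holds when $p$ is a monomial $\lambda x^k$: for a general monic $p$ of degree $d$ one has the short exact sequence $0\to\bigoplus_{i=1}^d S_i\to R/Rp\to\KK[x,x^{-1}]/(p)\to 0$ with $\KK[x,x^{-1}]/(p)\neq 0$, so $\ell(R/Rp)>d$ and $R\neq Rp\oplus\bigoplus_{i=1}^d S_i$ (already $p=x-1$ gives $\ell(R/R(x-1))=2>1=\ell(S_1)$). Your argument sidesteps this: since the complement $\Sigma'$ produced by Theorem~\ref{s.1} already lies in $I$, decomposing each $\sigma\in\Sigma'$ against $I=\bigoplus S_i\oplus Ih$ and using $I\cap Rh=Ih$ to get directness is exactly what is needed and valid for every $h$. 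So your proof is both a faithful rendering of the theorem and a repair of the justification sketched in the paper; I would keep your socle-level decomposition rather than the $R$-level one.
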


We further remark that the above submodule $\Sigma$ of $I$ is uniquely determined as follows. Note that $S_1\cong S_n$ by an isomorphism taking $f_1$ to $x^{n-1}f_n$ (and the basis $\{f_1,yf_1,y^2f_1,\dots\}$ to the corresponding basis $\{x^{n-1}f_n,yx^{n-1}f_n,y^2x^{n-1}f_n\dots\}$). The span of $I_0=\{x^{k-1}f_k|k\geq 1\}$ is the socle of $I$ regarded as a $\KK[x]$-module; in fact, $I$ is a direct sum of countably many copies of the injective hull of $\KK[x]/(x)$, regarded as a $\KK[x]$-module, and it is injective over $\KK[x]$ (as $\KK[x]$ is Noetherian; we will use this fact later). Then $\Sigma_0={\rm Span}_\KK(I_0)\cap \Sigma$ is the $\KK[x]$-socle of $\Sigma$, and this completely determines $\Sigma$ via the action of $y$: $\Sigma=R\Sigma_0=\KK[y]\Sigma_0$. This gives a complete set of parameters completely and uniquely determining any given non-semisimple ideal of $H$, in the form of a polynomial $p(x)$ and a finite dimensional subspace $L$ of $I_0$, contained in ${\rm Span}\{f_1,xf_2,\dots,x^{d-1}f_d\}$, $d=\deg(p)$. When $H$ is semisimple and contained in $I$, then again, its $\KK[x]$-socle determines it as above. Hence we have
\begin{corollary}
Any ideal of $R$ can be written as a direct sum of $R$-submodules
\begin{equation}
H=\KK[y]L\oplus Rp(x)
\end{equation}
where if $H$ is non-semisimple then $p(x)$ the unique non-zero polynomial in $x$ belonging to $H$, and $L=H\cap {\rm Span}_\KK(I_0)\cap S_1\oplus\dots S_{\deg(p)}$; if $H$ is semisimple, $p(x)=0$, $L=H\cap {\rm Span}_\KK I_0$ and $H=\KK[y]L$. 
\end{corollary}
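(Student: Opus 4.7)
The plan is to combine the structure theorem immediately above with the socle-identification argument already presented in the paragraph preceding this corollary. By the previous theorem, any left ideal $H$ admits a decomposition $H = \Sigma(H) \oplus Rp(x)$, in which $\Sigma(H)$ is a semisimple submodule of the socle $I$, contained in $\bigoplus_{i=1}^{d}S_i$ with $d=\deg(p)$, and uniquely determined once $p$ is taken monic of minimal degree (and $p=0$ precisely when $H$ is semisimple). The discussion preceding the corollary then yields $\Sigma(H) = \KK[y]\Sigma_0$, where $\Sigma_0 := \Sigma(H) \cap \Sp_\KK(I_0)$ is the $\KK[x]$-socle of $\Sigma(H)$.

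The remaining task is to identify $\Sigma_0$ intrinsically from $H$, i.e., to verify that $\Sigma_0 = L$, where $L = H \cap \Sp_\KK(I_0) \cap (S_1 \oplus \cdots \oplus S_d)$. The key step is to establish $\Sigma(H) = H \cap (S_1 \oplus \cdots \oplus S_d)$. One inclusion is immediate since $\Sigma(H) \subseteq \bigoplus_{i=1}^d S_i$. For the other, take $v \in H \cap (S_1 \oplus \cdots \oplus S_d) \subseteq H \cap I = \Sigma(H) \oplus (Rp(x) \cap I) = \Sigma(H) \oplus Ip(x)$, using the identity $Rp(x) \cap I = Ip(x)$ noted in the proof of the previous corollary. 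Write $v = s + t$ with $s \in \Sigma(H) \subseteq \bigoplus_{i=1}^d S_i$ and $t \in Ip(x)$. The top-term computation from that same proof shows that every nonzero element of $Ip(x)$ has a nonzero component in some $S_m$ with $m > d$; hence $t \in S_1 \oplus \cdots \oplus S_d$ forces $t=0$, giving $v = s \in \Sigma(H)$.

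Intersecting $\Sigma(H) = H \cap (S_1 \oplus \cdots \oplus S_d)$ with $\Sp_\KK(I_0)$ yields $\Sigma_0 = L$, whence $\Sigma(H) = \KK[y]L$ and $H = \KK[y]L \oplus Rp(x)$, as claimed. The semisimple case is handled analogously: there is no polynomial part to separate off, $\Sigma(H) = H \subseteq I$ directly, and the same socle identification gives $L = H \cap \Sp_\KK(I_0)$ and $H = \KK[y]L$. The only genuinely nontrivial ingredient is the top-term argument used to separate $\Sigma(H)$ from $Ip(x)$ inside a bounded block of $S_i$'s, but this is already carried out explicitly in the proof of the previous corollary, so here we merely invoke it.
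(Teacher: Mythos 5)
Your proof is correct and follows the same route the paper intends: take the decomposition $H=\Sigma(H)\oplus Rp(x)$ from the preceding theorem, identify $\Sigma(H)$ with $\KK[y]\Sigma_0$ where $\Sigma_0$ is its $\KK[x]$-socle, and then match $\Sigma_0$ with the intrinsically defined $L$. The paper itself gives no separate proof; it treats the corollary as a summary of the preceding discussion, so what you have done is fill in the one step left implicit, namely that $\Sigma(H)=H\cap\bigl(S_1\oplus\cdots\oplus S_d\bigr)$.

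One small simplification: rather than invoking the top-term computation from the proof of Corollary~2 to show $Ip(x)\cap\bigl(S_1\oplus\cdots\oplus S_d\bigr)=0$, you can get $\Sigma(H)=H\cap\bigl(S_1\oplus\cdots\oplus S_d\bigr)$ directly from the modular law. Since $Rp(x)\subseteq H\subseteq R=Rp(x)\oplus\bigl(S_1\oplus\cdots\oplus S_d\bigr)$, one has
\[
H \;=\; H\cap\Bigl(Rp(x)\oplus\bigoplus_{i=1}^d S_i\Bigr) \;=\; Rp(x)\oplus\Bigl(H\cap\bigoplus_{i=1}^d S_i\Bigr),
\]
and uniqueness of the complement $\Sigma(H)\subseteq\bigoplus_{i=1}^d S_i$ then forces $\Sigma(H)=H\cap\bigoplus_{i=1}^d S_i$. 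This avoids re-deriving the separation of $Ip(x)$ from the bottom block, although your argument is also valid and has the advantage of reusing a computation already carried out. The rest of your proof, including the semisimple case, is exactly right.
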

 %$\lh(\Sigma)=\deg(p(x))$ 

\subsection*{Determining Ext spaces}

The above gives us information on cyclic $R$-modules, which are isomorphic to quotients of $R$ by left ideals. If, in particular, $R/L$ is a cyclic module with $L$ finitely-generated non-semisimple, the above proof can be modified to show that $R/L$ has finite length. For general finite-length modules we have the following: 

\begin{theorem} 
Every finite-length module $M$ is the middle term of a short exact sequence $\ds 0 \rightarrow S_1^{\oplus k} \rightarrow M \rightarrow F \rightarrow 0$, where $k \in \NN$ and $F$ is finite-dimensional. All finite-dimensional $R$-modules are direct sums of the modules $L_p = \KK[x,x^{-1}]/(p)$, $p$ a (not-necessarily irreducible) polynomial, and there is a natural identification $\ds \Ext^1(L_p, S_1^{\oplus k}) \cong \bigoplus_{i=1}^k{\KK[T]/(p^*(T))}=\left(\KK[T]/(p^*(T))\right)^{\oplus k}$, where $p^*$ is the polynomial defined by $p^*(y) = p(x)y^{\deg (p)} \in \KK[y] \subseteq R$.
\end{theorem}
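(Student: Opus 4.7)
The statement breaks into three pieces. First, for the existence of the short exact sequence for finite-length $M$, I would take $N\subseteq M$ to be the sum of all submodules isomorphic to $S_1$; since $S_1$ is simple projective and $M$ has finite length, $N\cong S_1^{\oplus k}$ for some $k\in\NN$. Projectivity of $S_1$ forces $F:=M/N$ to contain no copy of $S_1$: any embedding $S_1\hookrightarrow F$ would lift to an injection $S_1\hookrightarrow M$ necessarily landing inside $N$. Hence every simple submodule of $F$ is $I$-annihilated, and passing to $F':=\{f\in F\mid If=0\}$ I would argue $F=F'$ as follows: if not, the socle of $F/F'$ contains a simple submodule $G/F'$, which again cannot be $\cong S_1$ by projectivity, so $IG\subseteq F'$; then $I^2 g=0$ for $g\in G$, and since $I^2=I$ (as $I$ is generated by the idempotents $f_n$), this forces $Ig=0$, a contradiction. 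Thus $IF=0$ and $F$ is a finite-length $\KK[x,x^{-1}]$-module, hence finite-dimensional. The same lifting/idempotency argument applied to any finite-dimensional $R$-module (which cannot contain the infinite-dimensional $S_1$) shows $IF=0$, reducing its structure to the classical PID decomposition $F\cong\bigoplus L_{p_i}$ over $\KK[x,x^{-1}]$.

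For the $\Ext$ computation, by the decomposition $I=\bigoplus_{i=1}^d S_i\oplus Ip$ (with $d=\deg p$ and $Ip\subseteq Rp$) established in the proof of Corollary~1, the natural short exact sequence $0\to K\to R\to L_p\to 0$ with $K=I+Rp$ simplifies to $K=S_1^{\oplus d}\oplus Rp\cong S_1^{\oplus d}\oplus R$, a projective resolution. Applying $\Hom_R(-,S_1^{\oplus k})$ and using that $\Hom_R(L_p,S_1^{\oplus k})=0$ (the explicit formula $f_n\cdot y^jf_1=\delta_{j,n-1}y^{n-1}f_1$ forces any $I$-annihilated element of $S_1^{\oplus k}$ to vanish) yields
\[
\Ext^1(L_p,S_1^{\oplus k})=\mathrm{coker}\bigl(\Hom_R(R,S_1^{\oplus k})\to\Hom_R(K,S_1^{\oplus k})\bigr).
\]

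The main technical step is evaluating this cokernel explicitly. Under the $\KK$-linear identification $S_1\cong\KK[y]$ sending $y^jf_1\mapsto y^j$, the key algebraic identity is $p(x)\cdot y^{d+m}=y^m p^*(y)$ for every $m\geq 0$, an immediate consequence of $x^jy^i=y^{i-j}$ for $i\geq j$. Splitting any $s\in\KK[y]$ as $s=s_{\mathrm{low}}+y^d s_{\mathrm{high}}$ with $\deg s_{\mathrm{low}}<d$, the action becomes $p(x)\cdot s=p(x)s_{\mathrm{low}}+s_{\mathrm{high}}\,p^*(y)$; under the natural identifications $\Hom_R(S_i,S_1^{\oplus k})\cong\KK^k$ (via the $y^{i-1}$-coefficient) and $\Hom_R(Rp,S_1^{\oplus k})\cong S_1^{\oplus k}$ (evaluation at $p$), the restriction map sends $s\in S_1^{\oplus k}$ to $\bigl((s_{\mathrm{low}}^{(\ell)})_\ell,(p(x)s_{\mathrm{low}}^{(\ell)}+s_{\mathrm{high}}^{(\ell)}p^*(y))_\ell\bigr)\in\KK^{kd}\oplus S_1^{\oplus k}$. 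The coordinatewise map $\psi(a,q)=q-p(x)a\pmod{p^*(y)}$ then surjects onto $(\KK[y]/(p^*(y)))^{\oplus k}$ with kernel exactly the image, establishing the claimed isomorphism with $T$ corresponding to the $y$-action.
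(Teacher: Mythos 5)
Your proof is correct, and it diverges from the paper's in one substantive way worth noting. The paper's central technical lemma is the \emph{ring-theoretic} identity $Rp^{*}=Rp+I$, which it proves by an induction on the idempotents $f_k$; this exhibits the kernel of $R\twoheadrightarrow L_p$ as the \emph{free cyclic} module $Rp^{*}\cong R$, so that applying $\Hom(-,S_1^{\oplus k})$ turns the restriction map into multiplication by $p^{*}$ on $\KK[y]^{k}$, and the cokernel is read off as $\bigl(\KK[T]/(p^{*})\bigr)^{\oplus k}$. You instead bypass $p^{*}$ as a generator entirely: you take the kernel to be $K=I+Rp$ and use the internal direct-sum decomposition $K=\bigoplus_{i\le d}S_i\oplus Rp$ (already available from the Corollary's proof), so that $\Hom(K,S_1^{\oplus k})\cong\KK^{kd}\oplus S_1^{\oplus k}$ and the restriction map is $s\mapsto(s_{\mathrm{low}},\,p(x)\cdot s)$. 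The polynomial $p^{*}$ then enters only through the module-action identity $p(x)\cdot y^{d+m}=y^{m}p^{*}(y)$ on $S_1\cong\KK[y]$, and your explicit map $\psi(a,q)=q-p(x)a\pmod{p^{*}(y)}$ produces the cokernel directly. The trade-off: you avoid the inductive computation establishing $Rp^{*}=I+Rp$ (and with it the conceptual identification of the syzygy as free cyclic on $p^{*}$), at the cost of a slightly heavier explicit bookkeeping of the restriction map against the direct-sum decomposition; both routes of course rely on the decomposition $I=\bigoplus_{i\le d}S_i\oplus Ip$ from the earlier Corollary. For the first assertion of the theorem, your argument is somewhat more explicit than the paper's (which simply cites the classification of simples and projectivity of $S_1$), but it identifies the same submodule: your $N$, the sum of all $S_1$-subcopies, coincides with $IM$ because $I^{2}=I$ and $IS_1=S_1$, and your idempotency argument that $IF=0$ is exactly the content the paper leaves implicit.
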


\begin{proof} 
The first claim comes from the classification of simple $R$-modules, and the fact that $S_1$ is projective. The case where $L_p$ is a simple module in the sense of \cite{Ch} is discussed in \cite{Ab1}. In general, we start by verifying that $Rp^* = Rp + I$.  Indeed, $Rp + I$ is a left ideal whose correspondent in the commutative quotient algebra $R/I$ is two-sided, hence is itself a two-sided ideal. Therefore, $p^* = py^n \in Rp + I$, so $Rp^* \subset Rp+I$.  If $\ds p(x) = \sum_{i=0}^n{\alpha_ix^i}$, then $p^*(y) = \sum_{i=0}^n{\alpha_iy^{n-i}}$.  Therefore, $\ds x^np^*(y) = \sum_{i=0}^n{\alpha_ix^ny^{n-i}} = \sum_{i=0}^n{\alpha_i x^i} = p(x)$, so that $Rp \subset Rp^*$.  Therefore, since $\ds I = \bigoplus_{i=1}^n{S_i} \oplus Ip$, to prove the reverse containment it suffices to show that $f_k \in Rp^*$ for all $k \le n$.  We first compute $\ds f_1p^* = (1-yx)\sum_{i=0}^n{\alpha_iy^{n-i}} = \sum_{i=0}^n{\alpha_iy^{n-i}} - \alpha_nyx - \sum_{i=0}^{n-1}{\alpha_iy^{n-i}} = \alpha_n - \alpha_nyx = \alpha_nf_1$.  Since $\alpha_n \neq 0$, it follows that $f_1 \in Rp^*$.  Now, assume that $1 \le k \le n$, and that $f_i \in Rp^*$ for all $1 \le i < k$.  Then $\ds f_kp^* = \sum_{i=0}^n{\alpha_if_ky^{n-i}} = \sum_{i=0}^n{\alpha_iy^{n-i}f_{k-(n-i)}} = \alpha_nf_k + \sum_{i=0}^{n-1}{\alpha_iy^{n-i}f_{k-(n-i)}}$, where $f_j = 0$ for $j < 1$.  By the induction hypothesis, we conclude that $f_k \in Rp^*$, which proves the desired equality. 

It follows that $R/Rp^* \cong L_p$, so we obtain a short exact sequence $0 \rightarrow Rp^* \rightarrow R \rightarrow L_p \rightarrow 0$.  Applying $\ds \Hom (- , S_1^{\oplus k})$ to this sequence yields an exact sequence 

\begin{center} 
$\ds 0 \rightarrow \Hom (L_p , S_1^{\oplus k}) \rightarrow \Hom (R, S_1^{\oplus k}) \rightarrow \Hom (Rp^*, S_1^{\oplus k} ) \rightarrow \Ext^1 (L_p , S_1^{\oplus k}) \rightarrow \Ext^1(R, S_1^{\oplus k} ) = 0$
\end{center} 
where the last equality follows since $R$ is projective. Also note that $\ds \Hom (L_p, S_1^{\oplus k} ) = 0$, since $S_1$ is simple and infinite dimensional, while $L_p$ is always finite-dimensional.  To compute $\ds \Ext^1(L_p, S_1^{\oplus k} )$, take a non-zero morphism $\phi : Rp^* \rightarrow S_1^{\oplus k}$.  Consider the problem of extending $\phi$ to a morphism $\bar{\phi } : R \rightarrow S_1^{\oplus k}$. 
We identify $S_1$ with $R/Rx$, with basis $\{ \overline{y^i} \mid i \geq 0 \}$.  If $\bar{\phi}$ extends $\phi$, then $\ds p^*\bar{\phi}(1) = \bar{\phi}(p^*) = \phi (p^*)$.  But there exist points ${\bf{f}} = (f_1,\ldots , f_k), {\bf{g}} = (g_1,\ldots , g_k) \in \KK[y]^k$ such that $\phi (p^*) = \overline{{\bf{f}}}$ and $\bar{\phi}(1) = \overline{{\bf{g}}}$, where $\overline{\bf{f}}=(\overline{f_1},\dots,\overline{f_k})$.  It follows that $\overline{{\bf{f}}} = p^*\overline{{\bf{g}}} = \overline{p^*{\bf{g}}}$.  Since $Rx$ contains no non-zero polynomials in $y$, it follows that $f_i(y) = p^*g_i(y)$ in $R$, for all $i$.  Conversely, suppose that ${\bf{f}}$ and ${\bf{g}}$ are given, satisfying this equation.  Then since $Rp^*$ and $R$ are free, there exist unique $R$-module morphisms $\phi$, $\bar{\phi}$ satisfying $\phi (p^*) = \overline{{\bf{f}}}$ and $\bar{\phi }(1) = \overline{{\bf{g}}}$.  Then $\bar{\phi}$ extends $\phi$, i.e. the above diagram commutes.  In other words, the map $\ds \Theta : \Hom (Rp^* , S_1^{\oplus k}) \rightarrow \bigoplus_{i=1}^k{\KK[T]/(p^*(T))}$ defined by $\Theta (\phi ) = {\bf{f}}(T)$ is surjective, with kernel $\Hom (R,S_1^{\oplus k})$.  Hence $\ds \Ext^1(L_{p}, S_1^{\oplus k}) \cong \bigoplus_{i=1}^k{\KK[T]/(p^*(T))}$ naturally, as we wished to show.
%We identify $S_1$ with $R/Rx$, with basis $\{ \overline{y^i} \mid i \geq 0 \}$.  If $\bar{\phi}$ extends $\phi$, then $\ds p^*\bar{\phi}(1) = \bar{\phi}(p^*) = \phi (p^*)$.  But there exist points ${\bf{f}} = (f_1,\ldots , f_k), {\bf{g}} = (g_1,\ldots , g_k) \in \KK[y]^k$ such that $\phi (p^*) = \overline{{\bf{f}}}$ and $\bar{\phi}(1) = \overline{{\bf{g}}}$.  It follows that $\overline{{\bf{f}}} = p^*\overline{{\bf{g}}} = \overline{p^*{\bf{g}}}$.  Since $Rx$ contains no non-zero polynomials in $y$, it follows that $f_i(y) = p^*g_i(y)$ in $R$, for all $i$.  Conversely, suppose that ${\bf{f}}$ and ${\bf{g}}$ are given, satisfying this equation.  Then since $Rp^*$ and $R$ are free, there exist unique $R$-module morphisms $\phi$, $\bar{\phi}$ satisfying $\phi (p^*) = \overline{{\bf{f}}}$ and $\bar{\phi }(1) = \overline{{\bf{g}}}$.  Then $\bar{\phi}$ extends $\phi$, i.e. the above diagram commutes.  In other words, the map $\ds \Theta : \Hom (Rp^* , S_1^{\oplus k}) \rightarrow \bigoplus_{i=1}^k{\KK[T]/(p^*(T))}$ defined by $\Theta (\phi ) = {\bf{f}}(T)$ is surjective, with kernel $\Hom (R,S_1^{\oplus k})$.  Hence $\ds \Ext^1(L_{p}, S_1^{\oplus k}) \cong \bigoplus_{i=1}^k{\KK[T]/(p^*(T))}$, as we wished to show.
\end{proof}

Of course, we could have just done the computation above for $k=1$, but we wanted to emphasize how the extensions are obtained in general. In fact, we may use this result to compute the dimension of $\Ext^1(M_1,M_2)$, for $M_1$ and $M_2$ finite-length modules. To see this, again recall that if $M$ has finite-length, then $M$ is the middle term of a short exact sequence $0 \rightarrow S_1^{\oplus k} \rightarrow M \rightarrow F \rightarrow 0$, where $F$ is finite-dimensional. In fact, $S_1^{\oplus k } = IM$, where $I = \Soc (R)$ and $k$ is the number of times that $S_1$ appears as a factor in a composition series for $M$. Of course, $F = M/IM$. It will be useful to define $d(M)$ to be the largest non-negative integer for which $\ds M = S_1^{\oplus d(M)} \oplus M'$ for some submodule $M'$; this is obviously well defined by Krull-Schmidt.  

Decomposing $F$ into indecomposable components, we are able to compute $\ds \Ext^1(F,S_1)$ and $\Ext^1(F, S_1^{\oplus k} )$ using the above theorem and the finite additivity properties of $\Ext^1(\cdot , \cdot )$. Furthermore, since $S_1$ is projective, $\Ext(M, S_1^{\oplus k} ) = \Ext^1(S_1^{\oplus d(M)}\oplus M' , S_1^k ) = \Ext^1(S_1^{\oplus d(M)} , S_1^{\oplus k }) \oplus \Ext^1(M', S_1^{\oplus k}) = \Ext^1(M', S_1^{\oplus k})$. So to compute $\ds \Ext^1(M, S_1^{\oplus k})$, we may assume without loss of generality that $d(M) = 0$, i.e. that $M$ has no $S_1$-direct summands; the general case reduces to this by working with $M/S_1^{d(M)}$. Then we have the following: 

\begin{corollary} 
If $M$ has finite length and no $S_1$-direct summands, then $\ds \dim_{\KK}\Ext^1(M, S_1^{\oplus k}) = k\left[ \dim_{\KK}M/IM - \ell (IM) \right]$, where $\ell (IM)$ denotes the length of $IM$. In the general case when $d(M)\neq 0$, $\ds \dim_{\KK}\Ext^1(M, S_1^{\oplus k}) =\dim_{\KK}\Ext^1(M/S_1^{d(M)}, S_1^{\oplus k})$.
\end{corollary}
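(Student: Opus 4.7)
The preceding discussion already handles the reduction to the case $d(M) = 0$ (via $\Ext^1(S_1^{\oplus d(M)}, -) = 0$, by projectivity of $S_1$), so what remains is to establish the dimension formula when $M$ has no $S_1$-direct summands. The plan is to apply $\Hom(-, S_1^{\oplus k})$ to the short exact sequence
$$0 \to IM \to M \to F \to 0$$
and read off $\dim_{\KK}\Ext^1(M, S_1^{\oplus k})$ from the resulting long exact sequence.

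Three terms of that long exact sequence will vanish. First, $\Hom(F, S_1^{\oplus k}) = 0$: any nonzero morphism would have nonzero image containing a copy of the simple module $S_1$, which is impossible because $F$ is finite-dimensional while $S_1$ is not. Second, $\Ext^1(IM, S_1^{\oplus k}) = 0$, since $IM \cong S_1^{\oplus \ell(IM)}$ is a finite direct sum of copies of the projective module $S_1$. Third, and this is the step where the hypothesis $d(M)=0$ is crucial, $\Hom(M, S_1^{\oplus k}) = 0$: any nonzero morphism $M \to S_1$ is surjective by simplicity of $S_1$, and splits by projectivity of $S_1$, producing an $S_1$-direct summand of $M$, contradicting $d(M)=0$. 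The long exact sequence therefore collapses to
$$0 \to \Hom(IM, S_1^{\oplus k}) \to \Ext^1(F, S_1^{\oplus k}) \to \Ext^1(M, S_1^{\oplus k}) \to 0.$$

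The two remaining dimensions are computed directly. For the first, I claim $\End_R(S_1) = \KK$, which follows from a quick check with the idempotent $f_1 = 1-yx$: since $f_1 S_1 = \KK f_1$ (the only basis vector $y^i f_1$ on which $f_1$ acts non-trivially is $f_1$ itself), any $R$-endomorphism $\phi$ of $S_1$ satisfies $\phi(f_1) = f_1 \phi(f_1) \in \KK f_1$, hence is scalar; consequently $\dim_{\KK} \Hom(IM, S_1^{\oplus k}) = k\,\ell(IM)$. For the second, $F$ is annihilated by $I$, so it is a finite-dimensional module over $R/I \cong \KK[x,x^{-1}]$. By the structure theorem over this PID, $F \cong \bigoplus_i L_{p_i}$, where each $p_i$ may be chosen with $p_i(0) \neq 0$ (as $x$ is a unit in $\KK[x,x^{-1}]$). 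Under this normalization $\deg p_i^* = \deg p_i = \dim_{\KK} L_{p_i}$, and the previous theorem gives $\dim_{\KK} \Ext^1(L_{p_i}, S_1^{\oplus k}) = k\deg p_i^* = k \dim_{\KK} L_{p_i}$. Summing over $i$ yields $\dim_{\KK}\Ext^1(F, S_1^{\oplus k}) = k\dim_{\KK} F = k \dim_{\KK} M/IM$. Subtracting, we obtain the desired formula $\dim_{\KK}\Ext^1(M, S_1^{\oplus k}) = k[\dim_{\KK} M/IM - \ell(IM)]$.

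The main obstacles are minor but worth flagging: the projectivity-based vanishing $\Hom(M, S_1^{\oplus k}) = 0$ is exactly what forces the assumption $d(M) = 0$ (and explains the formal reduction to $M/S_1^{d(M)}$), and one must carry out the $p_i(0)\neq 0$ normalization so that the star operation preserves degree; without it the term $k \dim_{\KK}M/IM$ would be replaced by a smaller number and the formula would fail.
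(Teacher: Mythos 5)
Your proof is correct and follows essentially the same route as the paper's: apply $\Hom(-, S_1^{\oplus k})$ to $0 \to IM \to M \to M/IM \to 0$, observe that $\Hom(M,S_1^{\oplus k}) = 0$ (the only place the hypothesis $d(M)=0$ enters) and $\Ext^1(IM,S_1^{\oplus k})=0$ by projectivity, then count dimensions in the resulting short exact sequence using $\End_R(S_1)=\KK$ and the preceding theorem. The only differences are cosmetic: the paper treats $k=1$ and appeals to additivity afterward, and it leaves the identity $\dim_\KK\Ext^1(M/IM,S_1)=\dim_\KK M/IM$ as an ``easily follows'' step, whereas you usefully make the normalization $p_i(0)\neq 0$ explicit so that $\deg p_i^* = \dim_\KK L_{p_i}$.
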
 

\begin{proof} 
Apply $\Hom_R(-,S_1)$ to the short exact sequence $0 \rightarrow IM \rightarrow M \rightarrow M/IM \rightarrow 0$ to get the long exact sequence $0 \rightarrow \Hom_R(M/IM, S_1) \rightarrow \Hom_R(M,S_1) \rightarrow \Hom_R(IM, S_1) \rightarrow \Ext^1(M/IM, S_1) \rightarrow \Ext^1(M, S_1) \rightarrow \Ext^1(IM, S_1) = 0$, since $IM$ is projective. We claim that $\Hom_R(M,S_1) = 0$. Indeed, if there is a $0 \neq \varphi \in \Hom_R(M, S_1)$, then $\varphi$ is surjective and hence $M/\ker \varphi = S_1$. Since $S_1$ is projective, $M = \ker \varphi \oplus S_1$, contrary to hypothesis. So we have a short exact sequence $0 \rightarrow \Hom_R(IM, S_1)\rightarrow \Ext^1(M/IM, S_1) \rightarrow \Ext^1(M,S_1) \rightarrow 0$, and hence $\ds \dim_{\KK}\Ext^1(M,S_1) = \dim_{\KK}\Ext^1(M/IM,S_1) - \dim_{\KK} \Hom_R(IM, S_1)$. But $M/IM$ is finite-dimensional, and it easily follows from the previous theorem that $\dim_{\KK}\Ext^1(M/IM, S_1) = \dim_{\KK}M/IM$. Furthermore, $\ds \dim_{\KK}\Hom_R(IM,S_1) = \ell (IM)$ since $\Hom_R(S_1,S_1) = \KK$, and the general formula follows by finite additivity of $\Ext^1(M, - )$.
\end{proof} 

For an $R$-module $M$, we will use $\operatorname{lf}(M)$ to denote the sum of all finite-dimensional submodules of $M$. It is the largest locally finite submodule of $M$. We can then compute the dimensions of Ext groups between finite-length modules. More explicitly, we reduce the computations (recursively) to computations of Ext- and Hom- groups between finite-dimensional $\KK[x,x^{-1}]$-modules, where the answers are already known by classical PID theory; in the most general case, the answer is written also in terms of the dimension of the Hom-space $\Hom(M,N)$. Note that if $M,N$ are of finite length, then $\Hom_R(M,N)$ is finite dimensional, which follows inductively in a standard argument because $\dim(\End_R(T))<\infty$ for every simple $R$-module $T$.

\begin{corollary} 
Suppose $M$ and $N$ are finite-length $R$-modules %such that $d(M) =0$, 
and that $F$ is a finite-dimensional $R$-module. Then the following formulas hold: 
%\begin{enumerate} 
%\item 
\begin{eqnarray*}
(i)\,\,\,\,\,\, \dim_{\KK}\Ext^1(F,N) & = & \dim_{\KK}\Ext^1_{\KK[x,x^{-1}]}(F, N/IN) + \ell (IN)\dim_{\KK}F\\ & & + \dim_{\KK}\Hom_{\KK[x,x^{-1}]}(F, \operatorname{lf}(N)) - \dim_{\KK}\Hom_{\KK[x,x^{-1}]}(F, N/IN). \\
%\item 
(ii)\,\,\,\,\,\, \dim_{\KK}\Ext^1(M,N) & = & \dim_{\KK}\Ext^1(M/IM,N) +  \dim_{\KK}\Hom_R(M,N) - \\
 & & \dim\Hom_{\KK[x,x^{-1}]}(M/IM,\operatorname{lf}(N))-\ell(IM)\ell(IN). \\
(iii)\,\,\,\,\,\, \dim_\KK\Ext^1(M,F) & = & \dim_\KK \Ext^1_{\KK[x,x^{-1}]}(M/IM,F).\\
(iv)\,\,\,\,\,\, \dim_{\KK}\Ext^1(M,N) & = & \dim_\KK\Ext^1_{\KK[x,x^{-1}]}(M/IM,N/IN)%\dim_\KK(M)\ell(IN) 
+\dim_\KK(\Ext^1(M,IN)) \\ 
& & +\dim_\KK\Hom_R(M,N)-\dim_\KK\Hom_{\KK[x,x^{-1}]}(M/IM,N/IN)\\
& & -d(M)\ell(IN).
\end{eqnarray*} 
%\item $\dim_{\KK}Ext^1(M,N) = \dim_{\KK}\Ext^1(M/IM,N) - \ell (IM)\ell(IN)$. 
%\end{enumerate}
\end{corollary}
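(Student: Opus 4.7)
The plan is to derive each formula by applying $\Hom_R(-,-)$ to one of the canonical short exact sequences $0\to IL\to L\to L/IL\to 0$ (with $L=M$ or $L=N$) and reading off dimensions from the resulting six-term long exact sequence. For any finite-length $L$, the submodule $IL$ is a finite direct sum of copies of $S_1$ and the quotient $L/IL$ is a finite-dimensional $\KK[x,x^{-1}]$-module; since $IL$ is projective, $\Ext^1(IL,-)=0$, which makes the alternating-sum bookkeeping tractable.

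Before attacking the four cases I would gather four recurring ingredients. (a) For a finite-dimensional $R$-module $F$ (hence annihilated by $I$), any $R$-map $F\to N$ lands inside $N^I=\{n\in N\mid In=0\}$; because $I^2=I$ (easy from the idempotent decomposition $f_i=y^{i-1}(1-yx)x^{i-1}$), every finite-dimensional submodule of $N$ is already $I$-annihilated, and a short argument using that $N$ has finite length yields $N^I=\operatorname{lf}(N)$. Hence $\Hom_R(F,N)=\Hom_{\KK[x,x^{-1}]}(F,\operatorname{lf}(N))$, and likewise $\Hom_R(M,N/IN)=\Hom_{\KK[x,x^{-1}]}(M/IM,N/IN)$. (b) Since $\End_R(S_1)=\KK$ and every simple submodule of $N$ isomorphic to $S_1$ sits inside $IN=S_1^{\oplus\ell(IN)}$, we get $\dim\Hom_R(S_1,N)=\ell(IN)$, whence $\dim\Hom_R(IM,N)=\ell(IM)\ell(IN)$. (c) Projectivity of $S_1$ implies that any nonzero map $M\to S_1$ splits off an $S_1$-summand of $M$, so $\dim\Hom_R(M,S_1^{\oplus k})=k\,d(M)$. (d) For $F$ finite-dimensional and $W$ a $\KK[x,x^{-1}]$-module, $\Ext^1_R(F,W)=\Ext^1_{\KK[x,x^{-1}]}(F,W)$: reducing to $F=L_p$, the free resolution $0\to Rp^*\to R\to L_p\to 0$ identifies $\Ext^1_R(L_p,W)$ with $W/p^*W$, and on a $\KK[x,x^{-1}]$-module $p^*$ acts as $p(x)x^{-\deg p}$, which has the same image as $p$ since $x$ is a unit.

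With these in hand, the four formulas drop out mechanically. For (iii) I apply $\Hom_R(-,F)$ to $0\to IM\to M\to M/IM\to 0$: both $\Hom_R(IM,F)$ and $\Ext^1(IM,F)$ vanish (the first because any map from the infinite-dimensional simple $S_1$ into a finite-dimensional target is zero, the second by projectivity of $IM$), so $\Ext^1(M,F)\cong\Ext^1(M/IM,F)$, and (d) finishes. For (i) I apply $\Hom_R(F,-)$ to $0\to IN\to N\to N/IN\to 0$: $\Hom_R(F,IN)=0$ for the same reason, the previous corollary gives $\dim\Ext^1(F,IN)=\ell(IN)\dim_\KK F$, and (a) together with (d) identify the remaining $\Hom$ and $\Ext^1$ terms over $\KK[x,x^{-1}]$; the alternating sum yields (i). For (ii) I apply $\Hom_R(-,N)$ to $0\to IM\to M\to M/IM\to 0$: ingredient (b) yields $\dim\Hom_R(IM,N)=\ell(IM)\ell(IN)$ and $\Ext^1(IM,N)=0$ by projectivity, so the alternating sum, after rewriting $\Hom_R(M/IM,N)$ via (a), is exactly (ii). For (iv) I apply $\Hom_R(M,-)$ to $0\to IN\to N\to N/IN\to 0$: ingredient (c) gives $\dim\Hom_R(M,IN)=d(M)\ell(IN)$, (iii) rewrites $\Ext^1(M,N/IN)$ as $\Ext^1_{\KK[x,x^{-1}]}(M/IM,N/IN)$, and the alternating sum leaves $\Ext^1(M,IN)$ as the explicit remaining term, producing (iv).

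The main obstacle I anticipate is the careful verification of (d), which is the linchpin connecting $R$-theoretic $\Ext$ to the well-understood PID $\Ext$-theory over $\KK[x,x^{-1}]$; everything else is a matter of unwinding long exact sequences and counting dimensions. The subtle point is that $\Ext^1_R(L_p,W)$ must be computed via an $R$-resolution rather than a $\KK[x,x^{-1}]$-resolution, so the cleanest argument goes through $0\to Rp^*\to R\to L_p\to 0$ from the previous theorem, combined with the observation that $p^*W=pW$ on any $\KK[x,x^{-1}]$-module $W$.
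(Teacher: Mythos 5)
Your proposal is correct and follows essentially the same route as the paper: for each part one applies $\Hom_R(-,-)$ to the appropriate canonical short exact sequence $0\to IL\to L\to L/IL\to 0$ and reads off dimensions from the six-term long exact sequence, using projectivity of $IL$ and the identifications $\Hom_R(F,N)\cong\Hom_{\KK[x,x^{-1}]}(F,\operatorname{lf}(N))$ and $\dim\Hom_R(IM,N)=\ell(IM)\ell(IN)$. A small bonus of your write-up is that you make explicit (your ingredient (d)) the identification $\Ext^1_R(F,W)\cong\Ext^1_{\KK[x,x^{-1}]}(F,W)$ for a $\KK[x,x^{-1}]$-module $W$, via the resolution $0\to Rp^*\to R\to L_p\to 0$ and the observation that $p^*$ and $p$ have the same image on $W$; the paper invokes this silently when passing from $\Ext^1_R(F,N/IN)$ to $\Ext^1_{\KK[x,x^{-1}]}(F,N/IN)$.
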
 

\begin{proof} 
For the first formula, apply $\Hom_R(F, -)$ to the sequence $0 \rightarrow IN \rightarrow N \rightarrow N/IN \rightarrow 0$ to get the long exact sequence $0 \rightarrow \Hom_R(F,IN) \rightarrow \Hom_R(F,N) \rightarrow \Hom_R(F,N/IN) \rightarrow \Ext^1(F,IN) \rightarrow \Ext^1(F,N) \rightarrow \Ext^1(F,N/IN) \rightarrow 0$, where the higher-order terms vanish because $R$ is hereditary. It is then straightforward to verify that $\Hom_R(F, N) = \Hom_{\KK[x,x^{-1}]}(F, \operatorname{lf}(N))$ and that the map $\ds \Hom_R(F,N) \rightarrow \Hom_R(F,N/IN)$ is injective ($\Hom_R(F,IN)=0$). In other words, we have a short exact sequence $0 \rightarrow \Hom_{\KK[x,x^{-1}]}(F, \operatorname{lf}(N)) \rightarrow \Hom_{\KK[x,x^{-1}]}(F, N/IN) \rightarrow \Ext^1(F, IN) \rightarrow \Ext^1(F,N) \rightarrow \Ext^1_{\KK[x,x^{-1}]}(F,N/IN) \rightarrow 0$, which in combination with Theorem 4 implies the first formula. 

For the second, apply $\Hom_R(-,N)$ to the short exact sequence $0 \rightarrow IM \rightarrow M \rightarrow M/IM \rightarrow 0$ to get the exact sequence

\begin{center}
 $0\rightarrow\Hom (M/IM,N)\rightarrow\Hom (M,N)\rightarrow\Hom (IM,N)\rightarrow\Ext^1(M/IM,N)\rightarrow \Ext^1(M,N)\rightarrow 0$
\end{center} 
%$\dim(\Ext^1)$

Since $\Hom_R(M/IM,N)=\Hom_{\KK[x,x^{-1}]}(M/IM,\operatorname{lf}(N))$, $\dim_\KK(\Hom(M,N))<\infty$ and furthermore $\Hom_R(IM,N)=\Hom_R(IM,IN)=\ell(IM)\ell(IN)$, this yields the required result. \\
% The map $\ds \Hom_R(LM,N) \rightarrow \Hom_R(IM,N)$ is zero by the hypothesis that $d(M) = 0$, so that we have a short exact sequence $0 \rightarrow \Hom_R(IM,N) \rightarrow \Ext^1(M/IM, N) \rightarrow \Ext^1(M,N) \rightarrow 0$. Then $\Hom_R(IM,N) = \Hom_R(IM,IN)$ and $\dim_{\KK}\Hom_R(IM,IN) = \ell(IM)\ell(IN)$, which implies the second formula.
(iii) and (iv) are similar.
\end{proof}

\subsection*{A categorical equivalence and parametrizations of $R$-modules}

 Obtaining information on general $R$-modules is not as easy, but there are still things we can say. Somewhat surprisingly, we can derive a connection between the representation theory of $R$ and that of $\Gamma$. We provide an explicit construction below.

To start, pick an $R$-module $M$ and consider the canonical short exact sequence $0 \rightarrow IM \rightarrow M \rightarrow M/IM \rightarrow 0$. As previously mentioned, $IM$ is the sum of all faithful simple submodules of $M$ (the $S$-socle) and $M/IM$ is annihilated by $I$, hence a module over $\KK[x,x^{-1}]$. Furthermore, $S_1$ is the injective hull of the $\KK[x]$-module $\KK[x]/(x)$ (as before, regarded via restriction $\KK[x]\subset R$). Since $\KK[x]$ is Noetherian, $IM$ is an injective $\KK[x]$-module. Pick a splitting homomorphism $\alpha : M/IM \rightarrow M$ for this short exact sequence (considered now in $\PMod$.) We will call the pair $(M, \alpha )$ a {\em{weak splitting pair}}. We define a category $\msc$  whose objects are weak splitting pairs, and whose morphisms $\varphi : (M, \alpha  ) \rightarrow (N,\beta )$ are $R$-module homomorphisms $\varphi : M \rightarrow N$ such that $\imag (\varphi \circ \alpha ) \subset \imag \beta$. It is straightforward to note that this condition is equivalent to (the more natural condition) $\beta\circ \varphi=\varphi\circ \alpha$.

Given a weak splitting pair $\pi = (M,\alpha )$, we have $M = IM \oplus \imag \alpha$ as $\KK[x]$-modules. Note that left multiplication by $y$ leaves $IM$ invariant, but for each $m \in \imag \alpha$, we must write $ym = m_1 +m_2$, where $m_1 \in IM$ and $m_2 \in \imag \alpha$. Then $m \in \imag \alpha$ and $m = (xy)m = x(ym) = x(m_1+m_2) = xm_1 + xm_2$ implies that $xm_1 = 0$ (and $xm_2 = x$). We may then define a $\KK$-linear map $\psi_e(\pi ):\imag \alpha \rightarrow M_0$ as $\psi_e(\pi)(m) = m_1$, where $N_0$ is defined for an arbitrary $R$-module $N$ as the $\KK[x]$-socle of $IN$, i.e. $N_0 = \{ n \in N \mid xn = 0 \}$. If $N\subseteq R$, then $N_0=N\cap I_0$ in the notation from before. Let $\psi_f=\psi_f(\pi ) : \imag \alpha \rightarrow \imag \alpha$ be the left multiplication by $x$, and let $M_u(\pi) = M_0$ and $M_v(\pi ) = \imag \alpha$. The data $\Xi (\pi) =  (M_u, M_v, \psi_e, \psi_f )$ defines a representation of $\Gamma$ in the notations from Section 1. 

\begin{theorem} 
 $(M,\alpha ) \rightarrow \Xi (M,\alpha )$ induces a functor $\Xi : \msc \rightarrow \operatorname{Rep}(\Gamma )$. After corestriction, it is a functor $\Xi : \msc \rightarrow \msd$, where $\msd$ is the full subcategory of $\operatorname{Rep}(\Gamma)$ consisting of representations with an invertible map on the loop $f$. 
\end{theorem}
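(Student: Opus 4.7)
The plan is to verify four items in order: (i) the quadruple $\Xi(M,\alpha)=(M_u,M_v,\psi_e,\psi_f)$ is a well-defined representation of $\Gamma$; (ii) a morphism $\varphi:(M,\alpha)\to(N,\beta)$ of $\msc$ restricts to an intertwiner of the associated representations; (iii) identities and composition are preserved; and (iv) $\psi_f(\pi)$ is invertible, so that $\Xi$ corestricts to $\msd$.

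For (i), the key point is that $\alpha$ is a $\KK[x]$-linear section, so $\imag(\alpha)\subseteq M$ is $\KK[x]$-stable; hence left multiplication by $x$ does send $\imag(\alpha)$ into itself and $\psi_f(\pi)$ is well-defined. Well-definedness of $\psi_e(\pi)$ is already established in the paragraph preceding the statement: the splitting yields $M=IM\oplus\imag(\alpha)$ as $\KK[x]$-modules, and the computation there shows $xm_1=0$, placing $m_1\in M_0$. For (ii), set $\Xi(\varphi)_u=\varphi|_{M_0}$ and $\Xi(\varphi)_v=\varphi|_{\imag(\alpha)}$. Since $\varphi$ is $R$-linear, $x\varphi(m)=\varphi(xm)=0$ for $m\in M_0$, so $\varphi(M_0)\subseteq N_0$; and $\varphi(\imag(\alpha))\subseteq\imag(\beta)$ is exactly the hypothesis $\imag(\varphi\circ\alpha)\subseteq\imag(\beta)$. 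Compatibility with $\psi_f$ is immediate from $\varphi(xm)=x\varphi(m)$. The main checkpoint is compatibility with $\psi_e$: given $m\in\imag(\alpha)$, write $ym=m_1+m_2$ with $m_1\in M_0$ and $m_2\in\imag(\alpha)$. Applying $\varphi$ and using $R$-linearity gives $y\varphi(m)=\varphi(m_1)+\varphi(m_2)$, where $\varphi(m_1)\in IN$ (since $\varphi(IM)\subseteq IN$) and $\varphi(m_2)\in\imag(\beta)$. By uniqueness of the decomposition $N=IN\oplus\imag(\beta)$, $\varphi(m_1)$ is the $IN$-component of $y\varphi(m)$, which is precisely $\psi_e(N,\beta)(\varphi(m))$; thus $\Xi(\varphi)_u\circ\psi_e(\pi)=\psi_e(N,\beta)\circ\Xi(\varphi)_v$.

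Steps (iii) and (iv) are then direct. Functoriality holds because $\Xi(\varphi)$ is defined as a pair of restrictions of $\varphi$, so composition and identities are preserved component-wise. For invertibility of $\psi_f(\pi)$, I observe that $M/IM$ is annihilated by $I$ and hence is a module over $R/I\cong\KK[x,x^{-1}]$, on which $x$ acts invertibly; since $\alpha:M/IM\to\imag(\alpha)$ is a $\KK[x]$-module isomorphism, multiplication by $x$ on $\imag(\alpha)$ is invertible as well. The hardest step is the $\psi_e$-compatibility in (ii), since it must simultaneously leverage the $R$-linearity of $\varphi$, the $\KK[x]$-splitting structure of $\alpha$ and $\beta$, and the uniqueness of the decomposition $N=IN\oplus\imag(\beta)$; but all three ingredients are already in hand, so the verification reduces to identifying the $IN$-components of two equal elements of $N$.
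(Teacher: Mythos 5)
Your proof is correct and follows essentially the same route as the paper's: define $\Xi(\varphi)$ by restriction, verify the $\psi_e$-intertwining by writing $ym=m_1+m_2$, applying $\varphi$, and invoking uniqueness of the direct-sum decomposition of $N$, and deduce invertibility of $\psi_f$ from $\imag\alpha\cong M/IM$ being a $\KK[x,x^{-1}]$-module. The only cosmetic difference is that you use $\varphi(m_1)\in IN$ where the paper notes the stronger $\varphi(m_1)\in N_0$; both suffice.
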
 

\begin{proof} Given two weak splitting pairs $\pi_1 = (M,\alpha )$ and $\pi_2 = (M,\beta )$ and a morphism $\varphi : (M,\alpha ) \rightarrow (N,\beta )$ in $\msc$, we define $\Xi(\varphi )$ as follows: define $\Xi(\varphi)_u : M_0 \rightarrow N_0$ and $\Xi(\varphi)_v : \imag \alpha \rightarrow \imag \beta$ to be the restrictions of $\varphi$ to their respective domains. Since $\varphi$ is a morphism in $\msc$, these maps are well-defined. That $\Xi(\varphi )$ is a morphism $\Xi (\pi_1) \rightarrow \Xi (\pi_2)$ is equivalent to $\Xi(\varphi)_u \circ \psi_e(\pi_1) = \psi_e(\pi_2) \circ \Xi(\varphi )_v$ and $ \Xi(\varphi)_v\circ\psi_f(\pi_1) = \psi_f(\pi_2) \circ \Xi(\varphi )_v$. The latter equality is trivial, as $\Xi(\varphi)_v$ is $\KK[x]$-linear. The former reduces to showing  that $(\varphi (m))_1 = \varphi (m_1)$ for all $m \in \imag \alpha$, in the notation defining $\psi_e$ above. This is simple, since $\varphi (m)_1 + \varphi(m)_2 = y\varphi (m) = \varphi (ym) = \varphi (m_1) + \varphi (m_2)$ and $\varphi (m_1) \in N_0$, $\varphi (m_2) \in \imag \beta$. Finally we note that $\Xi$ preserves compositions $\pi_1 \xrightarrow[]{\varphi_1} \pi_2 \xrightarrow[]{\varphi_2} \pi_3$ in $\msc$ because the maps at $u$ and $v$ are simply restrictions of the $\varphi_i$'s. For the second statement, simply note that $\imag \alpha$ is a $\KK[x]$-module isomorphic to $M/IM$. But since left multiplication by $x$ is invertible on $M/IM$,  $\psi_f(\pi )$ must be invertible as well.
\end{proof}

 If we start with an arbitrary $\rho = (M_u,M_v, \psi_e, \psi_f )$ in $\msd$, we may construct a weak splitting pair $(M, \alpha )$ as follows: as a vector space, set $M = (S_1\otimes_{\KK}M_u) \oplus M_v$. Let $x$ act on $M$ as $x((rf_1)\otimes m_u, m_v) = ( (xrf_1)\otimes m_u, \psi_f(m_v) )$, and $y$ as $y((rf_1)\otimes m_u, m_v) = ((yrf_1)\otimes m_u + f_1\otimes\psi_e(m_v), \psi_f^{-1}(m_v))$. It is then easy to check that for any $m \in M$, $m = x(ym)$. We set $\alpha : M_v \rightarrow M$ to be the inclusion map. A morphism $\varphi : (M_u,M_v,\psi_e,\psi_f) \rightarrow (M_u',M_v', \psi_e', \psi_f' )$ in $\msd$ induces an $R$-module morphism $\Psi(\varphi)= (\id_{S_1}\otimes \varphi_u)\oplus \varphi_v : M(\rho ) \rightarrow M(\rho ')$ which satisfies $\Psi(\varphi)(M_v) \subset M_v'$, and under this construction morphisms in $\msd$ are preserved. In other words:  

\begin{theorem}
The above construction induces a functor $\Psi : \msd \rightarrow \msc$. The functors $\Xi$ and $\Psi$ induce an equivalence of categories $\msc \cong \msd$.
\end{theorem}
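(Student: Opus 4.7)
The plan is threefold: (a) verify that $\Psi$ is a well-defined functor, (b) construct a natural isomorphism $\e:\Xi\circ\Psi\xrightarrow{\sim}\id_{\msd}$, and (c) construct a natural isomorphism $\eta:\id_{\msc}\xrightarrow{\sim}\Psi\circ\Xi$; together these yield the equivalence. For (a), the only subtle point is that the formulas for the $x,y$-actions on $M(\rho)=(S_1\otimes_\KK M_u)\oplus M_v$ satisfy $xy=1$, which reduces to $xy=1$ on $S_1$, $xf_1=0$, and $\psi_f\psi_f^{-1}=\id$. Since $x$ acts as the left shift on $S_1$ and invertibly on $M_v$, the first summand is the $S_1$-isotypic component of $M(\rho)$ and hence equals $IM(\rho)$; the inclusion $\alpha:M_v\hookrightarrow M(\rho)$ is a $\KK[x]$-splitting of $M(\rho)\twoheadrightarrow M(\rho)/IM(\rho)$, so $(\Psi(\rho),\alpha)\in\msc$. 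Functoriality is immediate from the formula $\Psi(\varphi)=(\id_{S_1}\otimes\varphi_u)\oplus\varphi_v$.

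For (b), the $\KK[x]$-socle of $M(\rho)$ is $f_1\otimes M_u$, canonically identified with $M_u$, and $\imag\alpha$ equals $M_v$ on the nose. The formulas $xm_v=\psi_f(m_v)$ and $ym_v=(f_1\otimes\psi_e(m_v),\psi_f^{-1}(m_v))$ show that the data recovered by $\Xi$ agree with $\psi_e,\psi_f$ exactly, yielding the natural isomorphism $\e_\rho$.

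For (c), given $(M,\alpha)\in\msc$ we need a natural $R$-linear isomorphism $M\cong(S_1\otimes_\KK M_0)\oplus\imag\alpha$ compatible with the splittings. The second summand is tautological, so the heart of the matter is a natural $R$-linear isomorphism
\begin{equation*}
\mu_M:S_1\otimes_\KK M_0\to IM,\qquad rf_1\otimes m\mapsto (rf_1)\cdot m,
\end{equation*}
well-defined because $xm=0$ forces $R\otimes_\KK\KK m\to M$ to factor through $R/Rx=S_1$. The formula makes $\mu_M$ manifestly $R$-linear and natural in $(M,\alpha)$. Granted $\mu_M$ is an isomorphism, intertwining of the full $R$-action on the first summand follows by construction, while on the second summand it reduces to the relation $ym=m_1+m_2$ with $xm_1=0$ and $xm_2=m$ for $m\in\imag\alpha$, which is precisely how $\psi_e,\psi_f$ were defined in $\Xi$ and how $\Psi$ re-assembles them.

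The main obstacle is showing $\mu_M$ is an isomorphism. Surjectivity: $IM$ is the $S_1$-isotypic component of $M$, hence $R$-semisimple with each simple summand generated by its one-dimensional $\KK[x]$-socle, giving $IM=R\cdot M_0$. Injectivity: $S_1\otimes_\KK M_0\cong S_1^{(\dim M_0)}$ is $R$-semisimple, so any nonzero $R$-submodule contains a simple summand and therefore meets the $\KK[x]$-socle $f_1\otimes M_0$; but $\mu_M(f_1\otimes m)=(1-yx)m=m$ for $m\in M_0$, so $\mu_M$ is injective on that socle, forcing $\ker\mu_M=0$. Assembling the three stages completes the equivalence $\msc\cong\msd$.
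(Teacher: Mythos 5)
Your proposal is correct and follows essentially the same route as the paper's proof: verify $\Psi$ is a functor, then exhibit natural isomorphisms $\Xi\circ\Psi\cong\id_{\msd}$ and $\Psi\circ\Xi\cong\id_{\msc}$ by comparing the $x$- and $y$-actions on the reassembled modules. The one place you go beyond the paper's level of detail is the argument that $\mu_M\colon S_1\otimes_\KK M_0\to IM$ is an isomorphism (surjectivity via $IM=R\cdot M_0$, injectivity via the $\KK[x]$-socle), which the paper simply asserts; that added justification is sound.
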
  

\begin{proof} 
Clearly $\Psi (\id_{\rho} ) = \id_{\Psi(\rho )}$ for any representation $\rho$, and $\ds \Psi (\varphi \circ \varphi ') = (\id_{S_1}\otimes [\varphi\circ \varphi']_u)\oplus ([\varphi \circ \varphi']_v) = \left( (\id_{S_1}\otimes \varphi_u)\oplus \varphi_v \right) \circ \left((\id_{S_1}\otimes \varphi'_u)\oplus \varphi'_v \right)= \Psi (\varphi ) \circ \Psi (\varphi' )$, so $\Psi$ is a functor. For the second, we note that if $\pi = (M,\alpha )$ is given, then $\Psi(\Xi(\pi )) = (S_1\otimes M_0)\oplus \imag \alpha \cong IM \oplus \imag \alpha$. Furthermore, $x(\sigma , a) = (x\sigma , \psi_f(\pi)a) = (x\sigma, xa )$ and $y(\sigma , a) = (y\sigma + \psi_e(\pi)a , \psi_f(\pi)^{-1}a) = (y\sigma + a_1 , \psi_f(\pi)^{-1}a) = (y\sigma + a_1, a_2)$, since $a = xa_2$ and $x$ is invertible in $\imag \alpha$. Clearly then, $\Psi(\Xi(\pi)) \cong \pi$. The proof that $\Xi(\Psi(\rho)) \cong \rho$ for all $\rho \in \msd$ works similarly. So $\Psi$ and $\Xi$ are inverses at the level of objects. At the level of morphisms, $[\Psi\circ \Xi](\varphi )$ maps any $m \in IM$ to $\varphi (m)$, and any $m \in \imag \alpha$ to $\varphi (m)$. But by the above argument, $M \cong (S_1\otimes M_0) \oplus \imag \alpha$ and $N \cong (S_1\otimes N_0) \oplus \imag \beta$ naturally, so that $[\Psi \circ \Xi ](\varphi ) = \varphi$ with the natural identifications. By a similar argument, $[\Xi \circ \Psi](\varphi ) = \varphi$. 
\end{proof} 

Since there is a natural forgetful functor $U : \msc \rightarrow \RMod$ defined by $U((M,\alpha))=M$ which is surjective on objects, one can see that we may realize $\RMod$ as a quotient category of $\msd$ via the composition $U \circ \Psi : \msd \rightarrow \RMod$. Also, through the functor $\Xi$, it is easy to see that objects $(M,\alpha)$ with $M$ of finite length over $R$ correspond to finite dimensional $\Gamma$-representations. Moreover, the category $\msd$ is a category of modules: representations for which $f$ acts as an invertible element can be viewed as modules over the path algebra $\KK[\Gamma]$ with a relation making $f$ invertible, i.e. $\KK[\Gamma]\langle f^{-1}\rangle=\KK[\Gamma]*\KK[W]/\langle fW-1,Wf-1\rangle$ (of course, this is not an admissible ideal in the sense of representation theory of finite dimensional algebras, but we get an algebra nevertheless).  Results of similar flavor are given in \cite{AB} for  arbitrary Leavitt path algebras; our emphasis is again on the specific generators $x,y$. Although $\KK\Gamma$ is known to be of wild representation type, it is our hope that the above equivalence could be exploited to find new invariants for modules over the Jacobson algebra. It also opens up the possibility of studying $R$-modules geometrically, via the representation variety of $\Gamma$. 

\section{The direct finiteness conjecture and other outstanding problems}

Strong motivation for gathering such representation-theoretic data comes from the Direct Finiteness Conjecture. We briefly gather here what is known on this as well as connections to other outstanding problems. In \cite{K}, Kaplansky conjectured that if $G$ is an arbitrary group and $\KK$ is any field, whenever $xy = 1$ for $x,y \in \KK G$, then $yx = 1$ as well; that is, the group algebra is directly finite. In other words, the Direct Finiteness Conjecture is equivalent to the claim that $R$ does not embed in any group algebras. An algebra $A$ is said to be stably finite if all matrix algebras $M_n(A)$ are directly finite, and $G$ is stably finite when $\KK G$ is so. The direct finiteness conjecture for groups is known to be equivalent to the statement that every group is stably finite; also, $G$ is stably finite if $G\times H$ is directly finite for all $H$ (\cite{DJ}). %Direct finiteness was studied before for othe
 
% HERE BEGIN
The Direct Finiteness Conjecture of Kaplansky is known to hold in characteristic zero by \cite{M}, \cite{K}. It was first proved to also hold in characteristic $p>0$ for a wide class of groups - namely, for free-by-amenable groups - in \cite{AOP}. Later, in \cite{ES} it was shown that this conjecture is true in arbitrary characteristic for sofic groups - that is, groups that can be embedded into {\it metric} ultraproducts of finite groups. Sofic groups were originally introduced by Gromov \cite{G} motivated by an important problem in symbolic dynamics known as Gottschalk's surjunctivity conjecture (which he proved to hold for this class of groups in \cite{G}). This includes the result of \cite{AOP}. In fact, as it turns out, there are no known examples of non-sofic groups, and existence of sofic groups has become an important problem. More recently, in \cite{Be}, the conjecture is proved for \{finitely generated residually finite\}-by-sofic groups; this class includes some groups for which it is currently not known whether they are sofic or not. 
 
On the other hand, this connection of the direct finiteness conjecture has prompted interest from researchers coming from other directions (operator theory): \cite{Be, DHJ, DJ}. In \cite{DHJ} a class of finitely presented groups universal with respect to the direct finiteness conjecture is introduced, based on a computational idea: if $(\sum\limits_{i=1^n}\lambda_ig_i)(\sum\limits_{j=1}^k\mu_jh_j)=1$, then expanding and equating the two sides, one obtains a series of relations of the type $g_ih_j=g_kh_l$. This produces, for each pair $(n,k)$ a set of finitely many finitely presented groups, and the direct finiteness conjecture is equivalent to deciding direct finiteness for group algebras of these groups. Computational confirmation is obtained then for this conjecture for special small values of $(n,k)$ over the field with two elements $\FF_2$ and the associated groups \cite{DHJ}. 
 
Finally, it is worth mentioning the connection with another class of groups, known as hyperlinear groups. We refer to the the excellent surveys \cite{CLP,KP,Pv} for a history and state of the art of the subject, and basic definitions. Briefly, a hyperlinear group is a group that can be embedded into metric ultraproduct of the $n\times n$ unitary groups $U(n)$ over $\CC$. It is known that any sofic group is hyperlinear, but no examples of hyperlinear and non-sofic groups are known, and also, no example of non-hyperlinear groups is known. In fact, finding examples of groups that are not hyperlinear is equivalent to Connes' Embedding Conjecture for groups (by a result of Radulescu \cite{KP,Pv}), which is a central problem in mathematics and theoretical physics. This brings new light on direct finiteness: a counterexample would give the first examples of non-sofic groups, a proof could be considered further evidence for the Embedding Conjecture. 
 
In fact, in view of these and of the method of \cite{ES}, it is tempting to conjecture or at least ask the following question %whether it is possible to prove that every hyperlinear group satisfies the direct finiteness conjecture. 

\begin{question}
Does every hyperlienar group satisfy the direct finiteness conjecture?
\end{question}

In \cite{ES}, the authors construct an embedding of a sofic group $G$ into an (algebraic) ultraproduct of finite matrix rings, which turns out to be a continuous von-Neumann regular ring, and so is endowed with a tracial norm $N$ with values in $[0,1]$ (pseudo-rank function). Then one can apply the classical proof of characteristic 0 of Kaplanski \cite{K} (von-Neumann algebras) or Montgomery \cite{M} ($C^*$-algebras): if $xy=1$, then $yx$ is an idempotent; then $1=N(xy)=N(yx)$ so $N(1-yx)=0$; but $N(e)=0$ for an idempotent implies $e=0$. This norm is constructed as a limit over the ultrafilter of the normalized usual rank functions on the finite matrix rings. The embedding of $G$ makes use of partial maps $f:F\rightarrow {\rm Map}(V_{F,\epsilon})$ for finite subsets  $F$ of $G$ and suitable finite sets $V_{F,\epsilon}$ (coming from the definition of sofic groups), which in turn are lifted to maps $G\rightarrow M_n(\KK)=\End(\KK V_{F,\epsilon})$. This construction would still be possible if the set of partial maps is $F\rightarrow U(n)$ for unitary groups (over $\CC$); the main hurdle here is that one would not have the liberty of introducing the coefficients in $\KK$, as the image of these partial maps are in $M_n(\CC)$. Perhaps one way around this is to consider the (finitely generated) subgroups generated by such images, and use Malcev's theorem (they must be residually finite) and try to use such finite pieces as in the maps above. However, at the same time, it may be that such an attempt would be close to proving that a hyperlinear group is sofic, which as mentioned, is an important problem in its own.

\subsection*{A possible strategy for direct finiteness}

Finally, we note a possible strategy approaching of the direct finiteness of group algebras, based on the properties of the Toeplitz algebra. If $\KK G$ is not directly finite, then there is $R\subset \KK G$ for $x,y\in G$ with $xy=1, yx\neq 1$; moreover, there are many such copies of $R$, generated by pairs of elements $(gxh^{-1},hyg^{1})$. In fact, as left (right) $R$-modules $\KK G=\sum\limits_g Rg$ $(=\sum\limits_ggR)$, and there is a filtration of $R$-modules $\Sigma\subseteq F\subseteq \KK[G]$ where $\Sigma$ is the $S$-socle, $F/\Sigma$ is locally finite and is the $\KK[x,x^{-1}]$-torsion part of $\KK[G]/\Sigma$, and $\KK[G]/F$ is torsion free (over $R$ and $\KK[x,x^{-1}]$). Furthermore, this filtration is one of right ideals in $\KK[G]$, and then $G$ acts as automorphisms on $\Sigma$, $F/\Sigma$, $\KK[G]/F$ (by right multiplication). In view of the above technique involving splitting pairs, a possibly fruitful source of information could be looking at the left-$\KK[x]$ (and/or right $\KK[y]$-module) structure of $\KK[G]$, as there will be a large injective direct summand (coproduct of injective hulls of $\KK[x]/(x)$). Finally, not discussed here, the $R$-bimodule structure of $\KK[G]$ may yield new information, and so $R$-bimodules \cite{B} would become relevant as well.

\vspace*{3mm} 
\begin{flushright}
\begin{minipage}{148mm}\sc\footnotesize

Miodrag Cristian Iovanov\\
University of Iowa, 14 MacLean Hall, Iowa City, IA 52242\&\\
University of Bucharest, Faculty of Mathematics, Str.Academiei 14,
RO-70109, Bucharest, Romania \\
{\it E--mail address}: {\tt miodrag-iovanov@uiowa.edu
}\vspace*{3mm}

Alexander Sistko\\
University of Iowa, 14 MacLean Hall, Iowa City, IA 52242\\
{\it E--mail address}: {\tt alexander-sistko@uiowa.edu
}\vspace*{3mm}

\end{minipage}
\end{flushright}


\begin{thebibliography}{J\c{S}}

%Be, DHJ, DJ
%\section*{References}

%\begin{enumerate}  

%\bibitem[Kaplansky, 1969]{K} 

%\bibitem[Abrams 2014]{Ab1}
\bibitem{Ab1} 
G. Abrams, G. \emph{Leavitt Path Algebras: The First Decade}, (2014) arXiv/math:1410.1835; http://arxiv.org/abs/1410.1835. 

\bibitem{Ab2} 
G. Abrams,  F. Mantese, A. Tonolo, \emph{Extensions of Simples Modules over Leavitt Path Algebras}, (2015) arXiv/math:1408.3580 19 January 2015. 

%\bibitem[Alahmedi et. al. 2013]{AAJZ}
\bibitem{AAJZ} 
A. Alahmedi, H. Alsulami, S. Jain, E. Zelmaov, \emph{Structure of Leavitt Path Algebras of Polynomial Growth},  Proc. Natl. Acad. Sci. USA 110 (2013), no. 38, 15222--15224. 

\bibitem{AOP}
P. Ara, K.C. O�Meara, F. Perera, \emph{Stable finiteness of group rings in arbitrary characteristic}, Adv. Math. 170 (2002), 224-�238.

%\bibitem[Ara et. al.]{AMP}
\bibitem{AMP} 
P. Ara, M.A. Moreno, E. Pardo, \emph{Non-Stable K-Theory for graph algebras}, Algebr. Represent. Theory 10 (2007), No.2, 157--178. %accessed online at arxiv.org. 

%\bibitem[Ara et. al. 2014]{AR}
\bibitem{AR} 
P. Ara, M. Rangaswamy, \emph{Finitely Presented Simple Modules over Leavitt Path Algebras}, J. Algebra 417 (2014), 333--352. 

%\bibitem[Ara, Brustenga 2010]{AB} 
\bibitem{AB} 
P. Ara, M. Brustenga, \emph{Module Theory over Leavitt Path Algebras and K-Theory}, J. Pure Appl. Algebra 214 (2010), No. 7, 1131--1151.  

%\bibitem[Bavula 2010]{B} 
\bibitem{B} 
V. Bavula, \emph{The Algebra of One-Sided Inverses of a Polynomial Algebra}, J. Pure Appl. Algebra 214, No. 10 (2010), 1874--1897.

\bibitem{Be}
F. Berlai, \emph{Groups satisfying Kaplanski's stable finiteness conjecture}, (2015) arXiv/math:1501.02893v1.

\bibitem{CLP}
V. Caprano, M. Lupini, V. Pestov, \emph{Introduction to sofic and hyperlinear groups and Connes' embedding conjecture}, (2013), arXiv/math:1309.2034v4.

%\bibiyrm[Colak 2010]{Co}
\bibitem{Co} 
P. Colak, \emph{Two-Sided Ideals in Leavitt Path Algebras}, J. Algebra Appl. 10 (2011), No. 5, 801. 

%\bibitem[Chen 2012]
\bibitem{Ch} 
X-W. Chen, \emph{Irreducible Representations of Leavitt Path Algebras}, Forum Math. 27 (2012), No. 1, 549�-574.

%``Non-Stable K-Theory for Leavitt Path Algebras.''

\bibitem{DHJ}
K. Dykema, T. Heister, K. Juschenko, \emph{Finitely presented groups related to Kaplansky's direct finiteness conjecture}, Exp. Math. 24 (2015), No. 3, 326�-338. arXiv/math:1112.1790

\bibitem{DJ}
K. Dykema, K. Juschenko, \emph{On stable finiteness of group rings}, Algebra Discrete Math. 19 (2015), No. 1, 44�-47. 

%\bibitem[Elek \& Szab\'o 2004]{GS}
\bibitem{ES} 
G. Elek, E. Szab\'o, \emph{Sofic Groups and Direct Finiteness}, J. Algebra 280 (2004), No. 2, 426�-434. 

%\bibitem[Gromov 1999]
\bibitem{G}
M. Gromov, \emph{Endomorphisms of symbolic algebraic varieties}, J. Eur. Math. Soc. 1 (1999), 109�-197.

%\bibitem[Kaplansky 1969]{K}
\bibitem{K} 
I. Kaplansky, \emph{Fields and Rings}, University of Chicago Press, Chicago, IL, 1969.   

\bibitem{KP}
A. Kwiatkowska, V. Pestov, \emph{An introduction to hyperlinear and sofic groups}, preprint, 2014.

%\bibitem[Montgomery 1969]{M}
\bibitem{M} 
S. Montgomery, \emph{Left and Right Inverses in Group Algebras}, Bull. Amer. Math. Soc. 75 (1969), No. 3, 539--540. 

\bibitem{M1}
S. Montgomery, \emph{von Neumann finiteness of tensor products of algebras}, Comm. Algebra 11 (1983), No. 6, 595�-610.

\bibitem{pass1}
D. Passman, Infinite crossed products. Pure and Applied Mathematics, 135. Academic Press, Inc., Boston, MA,
1989.

\bibitem{pass2}
D. S. Passman, \emph{The Algebraic Structure of Group Rings}, Wiley-Interscience, New York, 1977. 

\bibitem{Pv} 
V. G. Pestov, Hyperlinear and sofic groups: a brief guide. \emph{Bull. Symbolic Logic} 14 (2008), No. 4, 449�-480.






%\bibitem{AMP} 
%Ara, P., O'Meara, K., Perera, F. ``Stable Finiteness of Group Rings in Arbitrary Characteristic.''
%Advances in Mathematics. 2002,170(2): 224-238.


%\item{DHJ} 
%Dykema, K., Heister, T., and Jucshenko, K. ``Finitely Presented Groups Related to Kaplansky's Direct Finiteness Conjecture.'' 28 Aug. 2012. Accessed online at arxiv.org.

%\end{enumerate}
\end{thebibliography}
\end{document}